\newcommand{\pushright}[1]{\ifmeasuring@#1\else\omit\hfill$\displaystyle#1$\fi\ignorespaces}
\newcommand{\pushleft}[1]{\ifmeasuring@#1\else\omit$\displaystyle#1$\hfill\fi\ignorespaces}
\newcommand{\bbZ}{\mathbb{Z}}
\newcommand{\bbR}{\mathbb{R}}
\newcommand{\Rd}{\mathbb{R}^d}
\newcommand{\Zd}{\mathbb{Z}^d}
\newcommand{\betac}{\beta_{\mathrm{\scriptscriptstyle c}}}
\renewcommand{\norm}[1]{\|#1\|}
\renewcommand{\abs}[1]{\lvert#1\rvert}
\newcommand{\normI}[1]{\left\|#1\right\|_{\scriptscriptstyle 1}}
\newcommand{\setof}[2]{\{#1\,:\,#2\}}
\newcommand{\given}{\,|\,}
\newcommand{\bgiven}{\bigm\vert}
\newcommand{\Bgiven}{\Bigm\vert}
\newcommand{\comp}{\mathrm{c}}
\newcommand{\eone}{\vec e_1}
\newcommand{\p}{\mathbb{P}}
\newcommand{\Z}{\mathbb{Z}}
\newcommand{\Ham}{\mathcal{H}}
\newcommand{\I}{\mathds{1}}
\renewcommand{\nleftrightarrow}{\mathrel{\ooalign{$\leftrightarrow$\cr\hidewidth$/$\hidewidth}}}
\renewcommand{\emptyset}{\varnothing}
\newcommand{\calA}{\mathcal{A}}
\newcommand{\calB}{\mathcal{B}}
\newcommand{\calG}{\mathcal{G}}
\newcommand{\normII}[1]{\|#1\|_{\scriptscriptstyle 2}}
\newcommand{\supp}{\mathrm{supp}}
\newcommand{\cov}{\mathrm{Cov}}
\newcommand{\uvec}{\mathbf{u}}
\newcommand{\bk}[1]{\langle #1 \rangle_\beta}
\newcommand{\bfJ}{\mathbf{J}}
\newcommand{\symmdiff}{\mathop{\triangle}}
\newcommand{\fcone}{\mathcal{Y}^\blacktriangleleft}
\newcommand{\bcone}{\mathcal{Y}^\blacktriangleright}
\newcommand{\diam}{D}
\newcommand{\vect}[1]{\overset{{}_{\shortrightarrow}}{#1}}
\newcommand{\tcev}[1]{\overset{{}_{\shortleftarrow}}{#1}}
\newcommand{\rhoL}{\rho_{\scriptscriptstyle\rm L}}
\newcommand{\rhoR}{\rho_{\scriptscriptstyle\rm R}}
\newcommand{\calBf}{\mathcal{B}^\blacktriangleleft}
\newcommand{\calBb}{\mathcal{B}^\blacktriangleright}
\newcommand{\walk}{\mathbf{S}}
\newcommand{\walkLaw}[1]{\mathbb{P}_{#1}}
\newcommand{\syncWalk}{\tilde{\walk}}
\newcommand{\syncWalkLaw}[1]{\tilde{\mathbb{P}}_{#1}}
\newcommand{\diffSyncWalk}{\check{\walk}}
\newcommand{\diffSyncWalkLaw}{\check{\mathbb{P}}}
\newcommand{\diffSyncWalkExp}{\check{\mathbb{E}}}
\newcommand{\hatSyncWalk}{\hat{\walk}}
\newcommand{\Hrange}{[\check{\walk}]^\parallel}
\newcommand{\isingLaw}[2]{\ifthenelse{\equal{#2}{}}{\langle#1\rangle_{\beta}}{\langle#1\rangle_{\beta,#2}}}
\newcommand{\isingPF}[1]{Z_{\beta,#1}}
\newcommand{\isingSet}[1]{\Omega_{#1}}
\newcommand{\evGraphSet}[2]{\ifthenelse{\equal{#2}{}}{\mathcal{E}_{#1}}{\mathcal{E}_{#1}(#2)}}
\newcommand{\evGraph}{\mathfrak{g}}
\newcommand{\evGraphLaw}[2]{\ifthenelse{\equal{#2}{}}{\mathfrak{P}^{#1}}{\mathfrak{P}^{#1}_{#2}}}
\newcommand{\randPathLaw}{\overline{\mathfrak{P}}}
\newcommand{\HTPF}[2]{\ifthenelse{\equal{#2}{}}{\mathbf{Z}^{\rm\scriptscriptstyle HT}(#1)}{\mathbf{Z}^{\rm\scriptscriptstyle HT}_{#2}(#1)}}
\newcommand{\currentSet}[2]{\ifthenelse{\equal{#2}{}}{\mathcal{N}_{#1}}{\mathcal{N}_{#1}(#2)}}
\newcommand{\current}{\mathbf{n}}
\newcommand{\currentw}[1]{\textnormal{w}_{\beta}(#1)}
\newcommand{\currentLaw}[2]{\ifthenelse{\equal{#2}{}}{\mathbf{P}^{#1}}{\mathbf{P}^{#1}_{#2}}}
\newcommand{\currentPF}[2]{\ifthenelse{\equal{#2}{}}{\mathbf{Z}^{\rm\scriptscriptstyle RC}(#1)}{\mathbf{Z}_{#2}^{\rm\scriptscriptstyle RC}(#1)}}
\newcommand{\FKPF}[1]{\mathbf{Z}^{\rm\scriptscriptstyle FK}_{#1}}
\newcommand{\RCMLaw}[2]{\ifthenelse{\equal{#2}{}}{\p\left(#1\right)}{\p_{#2}\left(#1\right)}}
\newcommand{\open}{\textnormal{\xspace open}}
\newcommand{\close}{\textnormal{\xspace closed}}
\newcommand{\openSet}[1]{\mathcal{O}(#1)}
\newcommand{\closeSet}[1]{\mathcal{C}(#1)}
\newcommand{\EvPart}[1]{\mathfrak{E}_{#1}}
\newcommand{\parityGraph}[2]{\ifthenelse{\equal{#2}{}}{\mathbf{G}_{#1}}{\mathbf{G}_{#1}(#2)}}
\newcommand{\frD}{\mathfrak{D}}
\newcommand{\calD}{\mathcal{D}}
\newcommand{\calP}{\mathcal{P}}
\newcommand{\calQ}{\mathcal{Q}}
\newcommand{\calR}{\mathcal{R}}
\newcommand{\nB}{{\vec{B}}}
\newcommand{\nA}{{\vphantom{\nB}A}{}}
\newcommand{\Path}[2][A]{\text{\sc Path}_G^{#1}(#2)}
\theoremstyle{plain}
\newtheorem{theorem}{Theorem}[section]
\newtheorem{lemma}[theorem]{Lemma}
\newtheorem{proposition}[theorem]{Proposition}
\newtheorem{remark}{Remark}[section]
\theoremstyle{definition}
\newtheorem{obs}{Observation}
\newtheorem{claim}{Claim}
\author{S\'{e}bastien Ott}
\address{Section de Mathématiques, Université de Genève, CH-1211 Genève, Switzerland}
\email{sebastien.ott@unige.ch}
\author{Yvan Velenik}
\address{Section de Mathématiques, Université de Genève, CH-1211 Genève, Switzerland}
\email{yvan.velenik@unige.ch}
\title{Asymptotics of even-even correlations in the Ising model}
\begin{document}

\begin{abstract}
We consider finite-range ferromagnetic Ising models on \(\Zd\) in the regime \(\beta<\betac\). We analyze the behavior of the prefactor to the exponential decay of \(\cov(\sigma_A,\sigma_B)\), for arbitrary finite sets \(A\) and \(B\) of even cardinality, as the distance between \(A\) and \(B\) diverges.
\end{abstract}

\maketitle

%
%

\section{Introduction and results}
\label{sec:Intro}

We consider the Ising model on \(\Zd\) with formal Hamiltonian
\[
\Ham = -\sum_{\{i,j\}\subset\Zd} J_{j-i} \sigma_i \sigma_j,
\]
where, as usual, \(\sigma_i\) denotes the random variable  corresponding to the spin at \(i\in\Zd\). The coupling constants \(\bfJ=(J(x))_{x\in\Zd}\) are assumed to satisfy the following conditions \footnote{Let us emphasize that these conditions are actually stronger than needed. In particular, the irreducibility condition could be substantially weakened at the cost of (minor) additional technicalities. However, ferromagnetism and short-range interactions (that is, \(J_x\leq e^{-c\norm{x}}\) for some \(c>0\)) are real restrictions.}:
\begin{itemize}
\item ferromagnetism: \(J_x\geq 0\) for all \(x\in\Zd\);
\item reflection symmetry: \(J_x = J_{\tilde x}\) if \(x=(x_1,\dots,x_d), \tilde{x}=(\tilde{x}_1,\dots,\tilde{x}_d)\) with \(|x_i|=|\tilde{x}_i|\) for all \(1\leq i\leq d\);
\item finite-range: \(\exists R<\infty\) such that \(J_x=0\) whenever \(\normII{x}\geq R\);
\item irreducibility: \(J_x>0\) for all \(x\in\Zd\) with \(\norm{x}=1\).
\end{itemize}

Let \(\calG_\beta\) be the set of all infinite-volume Gibbs measures at inverse temperature \(\beta\) and \(\betac = \sup\setof{\beta}{|\calG_\beta| = 1}\). As is well known, \(\betac\in(0,\infty)\) for all \(d\geq 2\). (We refer to~\cite{Friedli+Velenik-2017} for an introduction to these topics.)

\medskip
From now on, we suppose that \(d\geq 2\) and \(\beta<\betac\) and we denote by \(\mu_\beta\) the unique infinite-volume Gibbs measure. It was proved in~\cite{Aizenman+Barsky+Fernandez-1987} that, in this regime, spin-spin correlations decay exponentially fast: the inverse correlation length
\[
\xi_{\beta}(\uvec) = -\lim_{n\to\infty} \frac1n \log\cov_\beta(\sigma_0,\sigma_{[n\uvec]})
\]
exists and is positive, uniformly in the unit vector \(\uvec\) in \(\Rd\). Here, the covariance is computed with respect to \(\mu_\beta\), and \([y]\in\Zd\) denotes the component-wise integer part of \(y\in\Rd\).

\medskip
Given a function \(f:\{-1,1\}^{\Zd}\to\bbR\), we denote by \(\supp(f)\) the support of \(f\), that is, the smallest set \(A\) such that \(f(\omega)=f(\omega')\) whenever \(\omega\) and \(\omega'\) coincide on \(A\). \(f\) is said to be local if \(|\supp(f)|<\infty\).

Let \(\theta_x\) denote the translation by \(x\in\Zd\). \(\theta_x\) acts on a spin configuration \(\omega\) via \((\theta_x\omega)_y = \omega_{y-x}\) and on a function \(f\) via \(\theta_x f = f\circ \theta_{-x}\).

\medskip
We are interested in the asymptotic behavior of the covariances
\begin{equation}\label{eq:Covfg}
\cov_\beta(f, \theta_{[n\uvec]} g)
\end{equation}
as \(n\to\infty\), for pairs of local functions \(f\) and \(g\) and any unit vector \(\uvec\) in \(\Rd\). For any local function \(f\), there exist (explicit) coefficients \((\hat{f}_A)_{A\subset\supp(f)}\) such that
\[
f = \sum_{A\subset\supp(f)} \hat{f}_A \sigma_A ,
\]
where \(\sigma_A = \prod_{i\in A} \sigma_i\) (see, for example, \cite[Lemma~3.19]{Friedli+Velenik-2017}). Using this, \eqref{eq:Covfg} becomes
\begin{equation}\label{eq:expCov}
\cov_\beta(f, \theta_{[n\uvec]} g) = \sum_{\substack{A\subset\supp(f)\\B\subset\supp(g)}} \hat{f}_A \hat{g}_B \, \cov_\beta(\sigma_A, \sigma_{B+[n\uvec]}).
\end{equation}

This motivates the asymptotic analysis of the covariances \(\cov_\beta(\sigma_A, \sigma_{B+[n\uvec]})\) for \(A,B\Subset\Zd\) (that is, \(A\) and \(B\) are finite subsets of \(\Zd\)). Observe that, by symmetry, \(\cov_\beta(\sigma_A, \sigma_{B+[n\uvec]}) = 0\) whenever \(|A|\) and \(|B|\) have different parities. We can therefore assume, without loss of generality, that \(|A|\) and \(|B|\) are either both odd, or both even: one then says that one considers odd-odd, resp.\ even-even, correlations.

\subsection*{Odd-odd correlations}
In this case, the best nonperturbative result to date is the following:
\begin{theorem}[\cite{Campanino+Ioffe+Velenik-2004}]\label{thm:OZ-odd-odd}
Let \(A,B\Subset\Zd\) be two sets of odd cardinality. Then, for any \(d\geq 2\), any \(\beta<\betac\) and any unit vector \(\uvec\) in \(\Rd\), there exists a constant \(0 < C < \infty\) (depending on \(A,B,\uvec,\beta, \bfJ\)) such that
\[
\cov_\beta(\sigma_A,\sigma_{B+[n\uvec]}) = \frac{C}{n^{(d-1)/2}} e^{-\xi_{\beta}(\uvec) n} (1+o(1)) ,
\]
as \(n\to\infty\).
\end{theorem}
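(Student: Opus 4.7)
I would follow the random-current / Ornstein--Zernike strategy of the cited work. Since $|A|$ is odd, spin-flip symmetry gives $\langle \sigma_A\rangle_\beta = 0$ and similarly $\langle \sigma_{B+[n\uvec]}\rangle_\beta = 0$, so the covariance reduces to the correlation $\langle \sigma_A \sigma_{B+[n\uvec]}\rangle_\beta = \langle \sigma_{S_n}\rangle_\beta$, where $S_n := A \cup (B+[n\uvec])$ has even cardinality for $n$ large enough that $A$ and $B+[n\uvec]$ are disjoint. The random current representation then expresses this as a ratio $\mathbf{Z}^{S_n}/\mathbf{Z}^{\emptyset}$ of source-constrained partition functions, and any current with source $S_n$ admits a decomposition into edge-disjoint paths that pair up the sources, plus a sourceless background current.

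Because $|A|$ and $|B|$ are both odd, every such pairing contains at least one path joining $A$ to $B+[n\uvec]$. The dominant combinatorial structure for large $n$ is the one with \emph{exactly} one such macroscopic path $\gamma$, accompanied by short local pairings inside $A$ and inside $B+[n\uvec]$ (such local pairings exist precisely because $|A|-1$ and $|B|-1$ are even). The long path $\gamma$ is then analyzed as in Campanino--Ioffe--Velenik: after tilting by $e^{\xi_\beta(\uvec)n}$, one identifies random cone points along $\gamma$ and decomposes it as $\gamma = \gamma_{\mathrm{cap}}^{\mathrm{L}} \cdot \gamma_1 \cdots \gamma_N \cdot \gamma_{\mathrm{cap}}^{\mathrm{R}}$, where the middle pieces $\gamma_k$ are i.i.d.\ under the tilted law, with exponentially decaying diameter and strictly positive mean drift along $\uvec$. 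A renewal / local CLT argument for this effective random walk delivers the Gaussian prefactor $n^{-(d-1)/2}$.

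The constant $C$ then arises as a product of three asymptotically decoupled contributions: the caps $\gamma_{\mathrm{cap}}^{\mathrm{L}}$ and $\gamma_{\mathrm{cap}}^{\mathrm{R}}$, the short local pairings inside $A$ and inside $B+[n\uvec]$, and the Gaussian density at the endpoint. Strict positivity of $C$ follows from ferromagnetism, since each factor is a sum of nonnegative quantities that is manifestly not identically zero. The hardest step is the construction of the irreducible cone-point decomposition within the random current representation and the proof of exponential tails for the renewal law --- this is where the bulk of the Campanino--Ioffe--Velenik technology lies. Extending from the singleton case $|A|=|B|=1$ to arbitrary odd cardinalities only adds a bookkeeping layer: one must control the short local pairings uniformly in $n$ and argue that they factor out, which is standard given the finite range of $\bfJ$ and the a priori exponential decay of correlations in the regime $\beta<\betac$.
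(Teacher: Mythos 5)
Theorem~\ref{thm:OZ-odd-odd} is \emph{not} proved in this paper: it is imported wholesale from~\cite{Campanino+Ioffe+Velenik-2004}, so there is no internal proof to check your sketch against. What one can say is that your outline is broadly faithful to the Campanino--Ioffe--Velenik programme: the reduction $\cov_\beta(\sigma_A,\sigma_{B+[n\uvec]})=\langle\sigma_{S_n}\rangle_\beta$ by the vanishing of odd correlations at $\beta<\betac$ is correct, and the cone-point / renewal / local-CLT mechanism producing the $n^{-(d-1)/2}e^{-\xi_\beta(\uvec)n}$ asymptotics is indeed the engine of that theory. Two points deserve a flag, though. First, the cited work is built on the random-cluster and high-temperature (random-path) representations rather than the random-current representation you invoke; the path-extraction step is clean only in the HT picture (as Algorithm~\ref{alg:HTpaths} and Remark~\ref{rem:couplingRCHT} here suggest), whereas random currents carry multiplicities that make an ``edge-disjoint path decomposition'' ambiguous without further work. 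Second, you materially understate the step from $|A|=|B|=1$ to general odd cardinalities. The ``short local pairings factor out'' claim is precisely the delicate part: the long object and the local pairings live in the \emph{same} weighted graph and interact through hard exclusion (edge-disjointness in HT, cluster structure in RC), so the decoupling is not a consequence of finite range and a priori exponential decay but rather requires an OZ-type irreducibility/renormalization argument of its own --- this is why~\cite{Campanino+Ioffe+Velenik-2004} is a separate paper from~\cite{Campanino+Ioffe+Velenik-2003} rather than a remark appended to it. Your positivity argument for $C$ is also a touch glib: nonnegativity of all contributions is clear, but sharp asymptotics require showing the limit defining $C$ exists and is nonzero, which again rests on the renewal structure rather than on ferromagnetism alone.
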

This result has a long history, starting with the celebrated work by Ornstein and Zernike in 1914~\cite{Ornstein+Zernike-1914,Zernike-1916}, in which the corresponding claim (for fluids) is established (non-rigorously) in the case of the 2-point function (that is, when \(A\) and \(B\) are singletons).
That such a behavior indeed occurs was first shown when \(d=2\) using exact computations~\cite{Wu-1966}.
In general dimensions, the earliest rigorous derivations, valid for sufficiently small \(\beta\), are due to Abraham and Kunz~\cite{Abraham+Kunz-1977} and Paes-Leme~\cite{Paes-Leme-1978}, using very different approaches. A non-perturbative derivation, valid for arbitrary \(\beta<\betac\) was given by Campanino, Ioffe and Velenik~\cite{Campanino+Ioffe+Velenik-2003} (see also~\cite{Campanino+Ioffe+Velenik-2008}).

Derivations for general odd-odd correlations were first obtained by Bricmont and Fröhlich~\cite{Bricmont+Frohlich-1985a,Bricmont+Frohlich-1985b} and Minlos and Zhizhina~\cite{Zhizhina+Minlos-1988,Minlos+Zhizhina-1996}, for sufficiently small values of \(\beta\), and then by Campanino, Ioffe and Velenik~\cite{Campanino+Ioffe+Velenik-2004} for all \(\beta<\betac\).

\subsection*{Even-even correlations}
The asymptotic behavior of even-even correlations is more subtle and initially led to some controversy: in the case of energy-energy correlations (that is, when \(A\) and \(B\) are each reduced to two nearest-neighbor vertices), heuristic derivations by Polyakov~\cite{Polyakov-1969} and Camp and Fisher~\cite{Camp+Fisher-1971} led to distinct predictions. While both works agreed that the rate of exponential decay is now given by \(2\xi_{\beta}(\uvec)\), the predicted behavior for the prefactor were different: Camp and Fisher predicted a prefactor of order \(n^{-d}\) for all \(d\geq 2\), while Polyakov predicted a prefactor of order \(n^{-2}\) when \(d=2\), \((n\log n)^{-2}\) when \(d=3\) and \(n^{-(d-1)}\) when \(d\geq 4\). When \(d=2\), these predictions coincided and matched the result known from exact computations~\cite{Stephenson-1966, Hecht-1967}. It turned out that Polyakov's predictions were correct. For sufficiently small values of \(\beta\), this was first proved by Bricmont and Fröhlich~\cite{Bricmont+Frohlich-1985a, Bricmont+Frohlich-1985b} for dimensions \(d\geq 4\), and then by Minlos and Zhizhina~\cite{Zhizhina+Minlos-1988, Minlos+Zhizhina-1996} for dimensions \(2\) and \(3\); see also~\cite{Auil-2002, Auil+Barata-2005}.
Extensions to general even-even correlations, at small values of \(\beta\), were proved in~\cite{Zhizhina+Minlos-1988, Minlos+Zhizhina-1996, Boldrighini+Minlos+Pellegrinotti-2011}.

\medskip
Our main result is  the following non-perturbative derivation.
\begin{theorem}\label{thm:OZ-even-even}
Let \(A,B\Subset\Zd\) be two nonempty sets of even cardinality. Then, for any \(d\geq 2\), any \(\beta<\betac\) and any unit vector \(\uvec\) in \(\Rd\), there exist constants \(0 < C_- \leq C_+ < \infty\) (depending on \(A,B,\uvec,\beta,\bfJ\)) such that, for all \(n\) large enough,
\[
\frac{C_-}{\Xi(n)} e^{-2\xi_{\beta}(\uvec) n}
\leq
\cov_\beta(\sigma_A,\sigma_{B+[n\uvec]})
\leq
\frac{C_+}{\Xi(n)} e^{-2\xi_{\beta}(\uvec) n} ,
\]
with
\[
\Xi(n) =
\begin{cases}
n^2				&	\text{when } d=2,\\
(n\log n)^2		&	\text{when } d=3,\\
n^{d-1}			&	\text{when } d\geq 4.
\end{cases}
\]
\end{theorem}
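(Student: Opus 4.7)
The plan is to express the even-even covariance via the random-current representation as a sum over configurations in which $A$ and $B+[n\uvec]$ are connected by two coupled Ornstein--Zernike-type paths. I would first concentrate on the basic case $|A|=|B|=2$ (with, say, $A=\{a_1,a_2\}$, $B=\{b_1,b_2\}$) and then reduce the general case to it by expanding along source pairings: since $|A|$ and $|B|$ are both even, every non-trivial source partition in the switching-lemma decomposition of $\cov_\beta(\sigma_A,\sigma_{B+[n\uvec]})$ produces at least two long connections between $A$ and $B+[n\uvec]$, heuristically accounting for the doubled inverse correlation length $2\xi_\beta(\uvec)$.

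For the \textbf{upper bound}, Lebowitz' inequality already gives
\[
\cov_\beta(\sigma_A,\sigma_{B+[n\uvec]}) \le \bk{\sigma_{a_1}\sigma_{b_1+[n\uvec]}}\bk{\sigma_{a_2}\sigma_{b_2+[n\uvec]}} + \bk{\sigma_{a_1}\sigma_{b_2+[n\uvec]}}\bk{\sigma_{a_2}\sigma_{b_1+[n\uvec]}},
\]
and combining with Theorem~\ref{thm:OZ-odd-odd} yields a bound of order $e^{-2\xi_\beta(\uvec)n}/n^{d-1}$, which is sharp in $d\ge 4$. To sharpen this in $d=2,3$, I would replace each two-point factor by its Campanino--Ioffe--Velenik renewal expansion as a directed random walk with Gaussian transverse increments, and then analyze the effective interaction between the two walks, which in the transverse direction behaves as a non-intersection constraint. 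The non-intersection probability of two $(d-1)$-dimensional Gaussian bridges of length $n$ is of order $n^{-1}$ for $d-1=1$, $(\log n)^{-2}$ for $d-1=2$ and a positive constant for $d-1\ge 3$, which produces exactly the three regimes of $\Xi(n)$. For the \textbf{lower bound} I would follow a constructive dual strategy: using the same OZ renewal representation, restrict both walks to disjoint tubes of transverse width $\sim\sqrt{n}$ around two well-separated straight lines in the direction $\uvec$, decouple them by FKG / finite-energy arguments, and pay exactly the same non-intersection cost.

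The \textbf{main obstacle} will be matching the upper and lower bounds in the low-dimensional regimes $d=2,3$. The factor $(\log n)^2$ in $d=3$ arises from the marginal recurrence of two-dimensional bridges and is extremely sensitive to the precise step distribution of the effective OZ walk; showing that this marginal behavior survives the random-current pairing requires a quantitative, path-level refinement of Theorem~\ref{thm:OZ-odd-odd} rather than its one-point asymptotic statement. A secondary difficulty is the reduction from general even $|A|,|B|$ to the case $|A|=|B|=2$, since several matchings of the sources can in principle contribute to leading order and their contributions must be summed coherently.
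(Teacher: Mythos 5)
Your proposal captures the correct heuristic picture (two OZ-type paths with a non-intersection constraint), but the upper bound as you describe it does not work in dimensions \(d=2,3\), which are precisely the cases where the extra factor \(\psi_d(n)\) (equal to \(n^{-1}\) or \((\log n)^{-2}\)) appears. The gap is at the very first step: Lebowitz' inequality bounds \(\cov_\beta(\sigma_A,\sigma_B)\) by a sum of \emph{products of two-point functions}, and each such product corresponds to two random walks living in \emph{independent} copies of the system with no coupling whatsoever between them. You write that one should ``analyze the effective interaction between the two walks, which \dots behaves as a non-intersection constraint,'' but after applying Lebowitz there \emph{is} no interaction left: the two factors are literal products of expectations of independent models, and the OZ expansion of each factor gives two walks that may cross freely. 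Plugging in Theorem~\ref{thm:OZ-odd-odd} for each factor yields \(n^{-(d-1)}e^{-2\xi_\beta(\uvec)n}\), and there is no mechanism inside the Lebowitz bound to recover the extra \(n^{-1}\) (for \(d=2\)) or \((\log n)^{-2}\) (for \(d=3\)). In other words, Lebowitz is already too lossy: it discards exactly the negative correction that produces the non-intersection constraint.

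What the paper does instead is to \emph{not} split the covariance into a difference and bound each piece: the switching lemma is applied directly to \(\currentPF{A\symmdiff B}{}\currentPF{\emptyset}{} - \currentPF{A}{}\currentPF{B}{}\), rewriting the entire covariance as a nonnegative double-current weight supported on a \emph{non-connection} event \(\EvPart{A}^{\,\comp}\) (see~\eqref{eq:CovABRC} and Lemma~\ref{lem:UBdecoupledRW}). That non-connection event is then transferred to the double high-temperature representation via the parity coupling (Remark~\ref{rem:couplingRCHT}), and Lemma~\ref{lem:HTPathMonotonicity} reduces it to a genuine non-intersection of two HT paths extracted by Algorithm~\ref{alg:HTpaths}. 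The non-intersection constraint is therefore an exact feature of the representation, not an ``effective interaction'' to be argued for afterwards. Your lower bound sketch is closer in spirit to the paper's (FKG plus finite-energy to decouple the two clusters, then an OZ coupling and a random-walk non-intersection estimate), though the ``disjoint tubes of width \(\sim\sqrt n\) around two well-separated lines'' picture is not quite right either: the two paths must start and end at \(O(1)\) distance from each other, so forcing them into macroscopically separated tubes costs far more than \(\psi_d(n)\); what one actually conditions on is the sign (respectively, the non-vanishing) of the transverse difference walk, as in Lemma~\ref{lem:noDiamIntersecLB} and Appendix~\ref{app:RW}. Finally, the reduction from general even \(A,B\) to pairs is not a side issue one can postpone: in the paper it is built into Lemma~\ref{lem:UBdecoupledRW} via the sum over odd subsets \(A_1\subset A\), \(B_1\subset B\), and it has to be carried out \emph{before} applying the switching lemma, not after.
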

\begin{remark}
In addition to its being non-perturbative, an advantage of our approach is that it applies to arbitrary directions \(\uvec\) and a very general class of finite-range interactions, while the earlier approaches imposed severe limitations.
\end{remark}

\subsection*{Heuristics and scheme of the proof}
Let us try to provide some intuition for the behavior stated in Theorem~\ref{thm:OZ-even-even}, considering the simplest non-trivial case: \(A=\{x,y\}\), \(B+n\uvec=\{u,v\}\). The high-temperature representation of correlation functions (see Section~\ref{sec:HTrepr}) expresses the correlation function \(\bk{\sigma_A\sigma_{B+n\uvec}}\) as a sum over pairs of disjoint paths \(\gamma_1,\gamma_2\) with endpoints in \(\{x,y,u,v\}\), with a suitable weight associated to each realization of the two paths. The three possible pairing are thus:
\begin{center}
\resizebox{\textwidth}{!}{\begin{picture}(0,0)%
\includegraphics{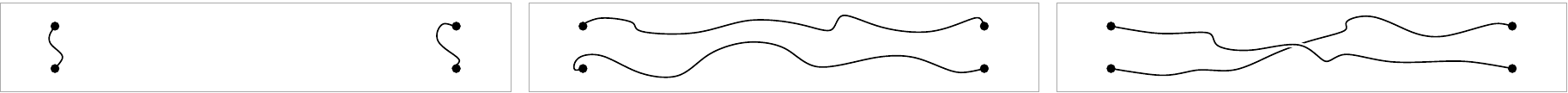}%
\end{picture}%
\setlength{\unitlength}{1036sp}%
\begingroup\makeatletter\ifx\SetFigFont\undefined%
\gdef\SetFigFont#1#2#3#4#5{%
  \reset@font\fontsize{#1}{#2pt}%
  \fontfamily{#3}\fontseries{#4}\fontshape{#5}%
  \selectfont}%
\fi\endgroup%
\begin{picture}(33391,1921)(630,-2375)
\put(33166,-1231){\makebox(0,0)[lb]{\smash{{\SetFigFont{12}{14.4}{\rmdefault}{\mddefault}{\updefault}{\color[rgb]{0,0,0}$u$}%
}}}}
\put(33166,-2131){\makebox(0,0)[lb]{\smash{{\SetFigFont{12}{14.4}{\rmdefault}{\mddefault}{\updefault}{\color[rgb]{0,0,0}$v$}%
}}}}
\put(23536,-1231){\makebox(0,0)[lb]{\smash{{\SetFigFont{12}{14.4}{\rmdefault}{\mddefault}{\updefault}{\color[rgb]{0,0,0}$x$}%
}}}}
\put(23536,-2131){\makebox(0,0)[lb]{\smash{{\SetFigFont{12}{14.4}{\rmdefault}{\mddefault}{\updefault}{\color[rgb]{0,0,0}$y$}%
}}}}
\put(21916,-1231){\makebox(0,0)[lb]{\smash{{\SetFigFont{12}{14.4}{\rmdefault}{\mddefault}{\updefault}{\color[rgb]{0,0,0}$u$}%
}}}}
\put(21916,-2131){\makebox(0,0)[lb]{\smash{{\SetFigFont{12}{14.4}{\rmdefault}{\mddefault}{\updefault}{\color[rgb]{0,0,0}$v$}%
}}}}
\put(12286,-1231){\makebox(0,0)[lb]{\smash{{\SetFigFont{12}{14.4}{\rmdefault}{\mddefault}{\updefault}{\color[rgb]{0,0,0}$x$}%
}}}}
\put(12286,-2131){\makebox(0,0)[lb]{\smash{{\SetFigFont{12}{14.4}{\rmdefault}{\mddefault}{\updefault}{\color[rgb]{0,0,0}$y$}%
}}}}
\put(10666,-1231){\makebox(0,0)[lb]{\smash{{\SetFigFont{12}{14.4}{\rmdefault}{\mddefault}{\updefault}{\color[rgb]{0,0,0}$u$}%
}}}}
\put(10666,-2131){\makebox(0,0)[lb]{\smash{{\SetFigFont{12}{14.4}{\rmdefault}{\mddefault}{\updefault}{\color[rgb]{0,0,0}$v$}%
}}}}
\put(1036,-1231){\makebox(0,0)[lb]{\smash{{\SetFigFont{12}{14.4}{\rmdefault}{\mddefault}{\updefault}{\color[rgb]{0,0,0}$x$}%
}}}}
\put(1036,-2131){\makebox(0,0)[lb]{\smash{{\SetFigFont{12}{14.4}{\rmdefault}{\mddefault}{\updefault}{\color[rgb]{0,0,0}$y$}%
}}}}
\put(2116,-1681){\makebox(0,0)[lb]{\smash{{\SetFigFont{12}{14.4}{\rmdefault}{\mddefault}{\updefault}{\color[rgb]{0,0,0}$\gamma_1$}%
}}}}
\put(9271,-1681){\makebox(0,0)[lb]{\smash{{\SetFigFont{12}{14.4}{\rmdefault}{\mddefault}{\updefault}{\color[rgb]{0,0,0}$\gamma_2$}%
}}}}
\end{picture}%
}
\end{center}
For the product \(\bk{\sigma_A}\bk{\sigma_{B+n\uvec}}\), one obtains a representation similar to the left-most one above, but with the two paths \(\gamma_1,\gamma_2\) now living in independent copies of the system (and not necessarily disjoint). Since \(\beta<\betac\), typical paths between \(x\) and \(y\) and between \(u\) and \(v\) do not wander far from their endpoints. Therefore, when \(n\gg 1\), one might expect that the contribution from the left-most of the above pictures essentially factorizes, thus canceling the contribution of \(\bk{\sigma_A}\bk{\sigma_{B+n\uvec}}\) and leaving only the contributions of the two right-most pictures. If the two paths could be considered independent, one would then recover the square of the usual Ornstein--Zernike decay. There are however, several problems with this argument (which, in particular, are responsible for the deviations from this behavior in dimensions \(2\) and \(3\)). First, it turns out that the interaction between the two paths in the left-most picture, although small, decays exponentially in \(n\) with a rate \(2\xi_{\beta}(\uvec)\), that is, the correction is of the same (exponential) order as the target estimates in Theorem~\ref{thm:OZ-even-even}. Therefore, the behavior of the truncated 4-point function cannot be read solely from the last two pictures. Second, there is a non-trivial infinite-range interaction between the two paths (as well as self-interactions) that make approximating them by independent random walks delicate. Third, the constraint that the two paths do not intersect, although mostly irrelevant in dimensions \(d\geq 4\), is crucial when \(d=2\) or \(3\) and is ultimately responsible for the anomalous behavior observed in these dimensions.

In order to solve these problems, we rely on a combination of several graphical representations of the Ising model.
Name\-ly, it is well known that the random-current representation offers an extremely efficient way of dealing with truncated correlations, expressing the difference as the probability of a suitable event in a duplicated system. In particular, it suppresses the need of separately estimating the 4-point function and the product of the 2-point functions.
Using this and a coupling of the resulting double random-current configurations with the paths from the corresponding high-temperature representation, we obtain an upper bound on the truncated correlation function in terms two high-temperature paths connecting vertices in \(A\) to vertices in \(B\) (that is, roughly speaking, to the two right-most pictures above). The remarkable feature is that these paths live in two \emph{independent} copies of the system and are only coupled through the constraint that they do not intersect (see Fig.~\ref{fig:heuristics}).

This approach does not apply for the lower bound. In this case, we work directly in the random-cluster representation, observing that the FKG inequality allows one to cancel (as a lower bound) the product of the 2-point functions with a suitable part of the 4-point function. Again the resulting picture is that of two clusters connecting vertices in \(A\) to vertices in \(B\), living in independent copies of the system and coupled through a non-intersection constraint.

Using the Ornstein--Zernike (OZ) theory developed in~\cite{Campanino+Ioffe+Velenik-2003,Campanino+Ioffe+Velenik-2008,Ott+Velenik-2017}, we then approximate the high-temperature paths, resp.\ the clusters in the random-cluster representation, by effective directed random walks on \(\Zd\). This allows one to obtain upper and lower bounds given by the square of the usual OZ asymptotics multiplied by the probability that the two effective random walk bridges do not intersect. The latter being \(\Theta(n^{-1})\) when \(d=2\), \(\Theta((\log n)^{-2})\) when \(d=3\) and \(\Theta(1)\) when \(d\geq 4\), this leads to the claim of Theorem~\ref{thm:OZ-even-even}.

\subsection*{Open problems}
\subsubsection*{Sharp asymptotics.}
In contrast to Theorem~\ref{thm:OZ-odd-odd}, Theorem~\ref{thm:OZ-even-even} only provides bounds, not sharp asymptotics. It would be desirable to remove this limitation (note that such sharp asymptotics were obtained in the earlier approaches, albeit only for \(\beta\) small enough).
There are several places in which we lose track of the sharp prefactor, but only two steps where this is essential (the inequalities in~\eqref{ineq:crucial1} and~\eqref{ineq:crucial2}), all others could be dealt with at the cost of (non-negligible) additional technicalities.
One way to obtain sharp asymptotics may be to build a version of the OZ theory applicable directly in the (double) random-current setting, maybe by building on the construction in~\cite{Ott2018}.

\subsubsection*{Short-range interactions.}
We only consider finite-range interactions. However, the same results should hold for infinite-range interactions, as long as those decay at least exponentially fast with the distance. Again, this would require a suitable extension of the OZ theory. We plan to come back to this issue in a future work.

\subsubsection*{Rate of exponential decay.}
The random-current representation plays an essential role in our proof. Even proving that the rate of exponential decay is \(2\xi(\uvec)\) does not seem to be immediate using only the random-cluster representation. This would be necessary to investigate similar questions in the Potts model.

\subsubsection*{Other models.}
The first thing to understand in the context of more general models (in particular, with a richer symmetry group) is to determine what are the relevant classes of functions (playing the role of the \(\sigma_A\), with \(|A|\) even or odd, in the Ising model). Of course, characters of the symmetry group seem the natural generalization, but in which classes should they be split?

\begin{figure}
\centering
\includegraphics[height=3cm]{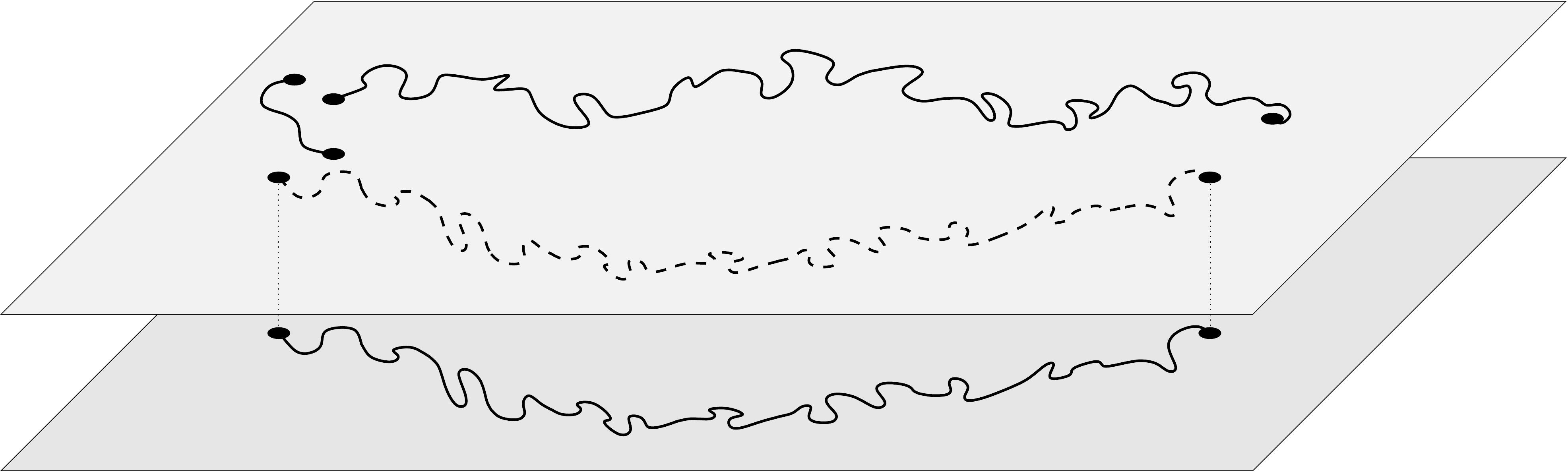}\\[5mm]
\includegraphics[height=3cm]{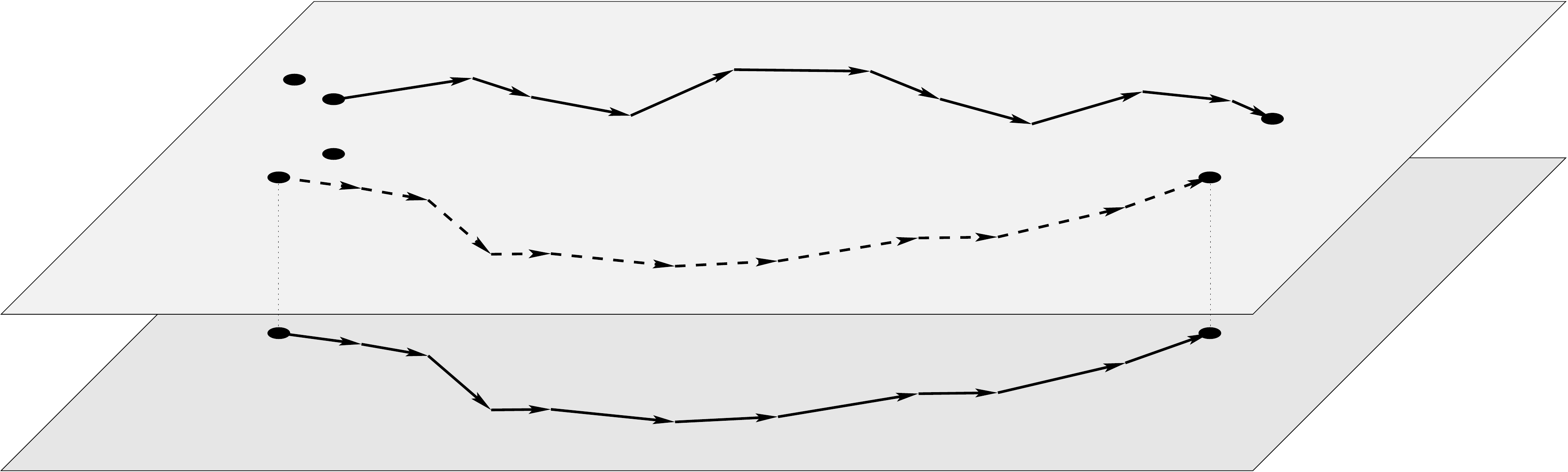}
\caption{\emph{Top:} A sketch of the graphical representation corresponding to the upper bound on the truncated correlation function \(\cov_\beta(\sigma_A,\sigma_B)\) (here with \(|A|=4\) and \(|B|=2\)). There are two paths, living in independent copies of the system, each connecting one vertex of \(A\) to one vertex of \(B\) and non-intersecting. The remaining two vertices of \(A\) are connected to each other. \emph{Bottom:} the corresponding non-intersecting directed random walks.}
\label{fig:heuristics}
\end{figure}

%
%

\section*{Acknowledgments}
The authors gratefully acknowledge the support of the Swiss National Science
Foundation through the NCCR SwissMAP.

%
%

\section{Proof of Theorem~\ref{thm:OZ-even-even}}

\subsection{The graphical representations}

We consider the ferromagnetic Ising model on a finite graph $G=(V_G,E_G)$ with free boundary conditions (since $\beta<\betac$ in our application, which boundary condition is used does not matter). The set of configurations is $\isingSet{G} = \{-1,+1\}^{V_G}$, and the expectation of a function \(f\) is given by
\begin{equation}
	\label{eq:IsingMeasure}
	\isingLaw{f}{G} = \frac{1}{\isingPF{G}} \sum_{\sigma\in\isingSet{G}} f(\sigma) \exp\Bigl(\beta\sum_{e=\{i,j\}\in E_G} J_e\sigma_i\sigma_j\Bigr),
\end{equation}
where the coupling constants \((J_e)_{e\in E_G}\) are nonnegative.

\smallskip
We now describe three graphical representations of the Ising model: the random-current (RC) representation with its switching property, the random-cluster (FK) representation and the high-temperature (HT) representation.
We will make the convention of systematically identifying a sub-graph $F\subset G$ with the induced function on edges equal to $1$ when the edge is present in $F$ and to $0$ otherwise.
General references for this section are~\cite{Friedli+Velenik-2017, Duminil-Copin-2017}.

\subsubsection{Random-current representation}
The random-current measure on $G$ is the non-negative measure on $(\Z_{\geq 0})^{E_G}$ associating to \(\current=(\current_e)_{e\in E_G}\) the weight
\[
\currentw{\current} = \prod_{e\in E_G} \frac{(\beta J_e)^{\current_e}}{\current_e !}.
\]
Given a random-current configuration $\current$, the incidence of a vertex $v\in V_N$ is defined as
\[
I_v(\current) = \sum_{e\in E_G, e\ni v}\current_e.
\]
The sources $\partial\current$ of a configuration \(\current\) are the vertices with odd incidence. Let us now express the unnormalized correlation functions of the Ising model with free boundary condition in terms of random currents. Let \(A\subset V_G\) with \(|A|\) even.
A Taylor expansion of the Boltzmann weight yields
\begin{align*}
	\sum_{\sigma\in\Omega_{G}}\sigma_{A}\prod_{e=\{i,j\}\in E_G} e^{\beta J_e\sigma_i\sigma_j} &= \Bigl(\prod_{i\in V_G}\sum_{\sigma_i\in\{-1,1\}}\Bigr)\sigma_{A}\sum_{\current:E_G\to \Z_{\geq 0}} \currentw{\current}  \prod_{i\in V_G}(\sigma_i)^{I_i(\current)}\\
	&= 2^{|V_G|}\sum_{\partial\current=A} \currentw{\current}.
\end{align*}
We will write
\begin{align*}
	\currentPF{A}{G} &= \sum_{\partial\current=A} \currentw{\current},\\
	\currentPF{A}{G}\currentPF{B}{G}\left\{F(\current_1+\current_2)\right\} &= \sum_{\substack{\partial\current_1=A\\ \partial\current_2=B}} \currentw{\current_1}\currentw{\current_2} F(\current_1+\current_2),
\end{align*}
and abuse notation by replacing the indicator function of an event by the event inside the braces. The set of currents on $G$ with sources $A$ will be denoted $\currentSet{G}{A}$ (with $\currentSet{G}{}\equiv\currentSet{G}{\emptyset}$). Correlation functions of the Ising model then become
\[
\isingLaw{\sigma_A}{G} = \frac{\currentPF{A}{G}}{\currentPF{\emptyset}{G}}.
\]
For a current configuration $\current$, denote by $\widehat{\current}$ the graph obtained by keeping the edges from $G$ with $\current_e>0$ and removing those with $\current_e=0$. The feature that makes this representation extremely useful is the following \emph{Switching Lemma} (see \cite{Duminil-Copin-2017} for a proof and applications).
\begin{lemma}[Switching Lemma]\label{lem:switching}
	For any $A,B\subset V_G$ and any function $F$ on currents,
	\[
	\currentPF{A}{G} \currentPF{B}{G} \bigl\{F(\current_1+\current_2)\bigr\}
	=
	\currentPF{A\symmdiff B}{G} \currentPF{\emptyset}{G} \bigl\{\mathds{1}_{\mathfrak{E}_A}(\widehat{\current_1+\current_2}) F(\current_1+\current_2) \bigr\},
	\]
where $\mathfrak{E}_A$ is the event that $A$ is evenly partitioned by $\widehat{\current_1+\current_2}$ (that is, each connected component of \(\widehat{\current_1+\current_2}\) contains an even number of vertices of \(A\)) and \(A\symmdiff B = (A\cup B) \setminus (A\cap B)\).
\end{lemma}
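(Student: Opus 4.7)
The plan is to rewrite both sides by grouping pairs $(\current_1,\current_2)$ by their pointwise sum $\current=\current_1+\current_2$, reduce to a purely combinatorial identity about sub-multigraphs of a fixed multigraph, and then establish that identity via a spanning-forest symmetric-difference bijection.

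First I would observe that the source constraints $(\partial\current_1,\partial\current_2)=(A,B)$ on the LHS and $(\partial\current_1',\partial\current_2')=(A\symmdiff B,\emptyset)$ on the RHS both force $\partial\current=A\symmdiff B$, and that the multinomial identity $\currentw{\current_1}\currentw{\current_2}=\currentw{\current}\prod_e\binom{\current_e}{\current_{1,e}}$ lets me factor out $\currentw{\current}F(\current)$ from the outer sum on each side. Since $F$ and $\mathds{1}_{\mathfrak{E}_A}(\widehat{\current})$ depend only on $\current$, the lemma reduces to the claim that, for every $\current\in\currentSet{G}{A\symmdiff B}$,
\[
\sum_{\substack{\current_1\leq\current\\\partial\current_1=A}}\prod_{e}\binom{\current_e}{\current_{1,e}}\;=\;\mathds{1}_{\mathfrak{E}_A}(\widehat{\current})\sum_{\substack{\current_1'\leq\current\\\partial\current_1'=A\symmdiff B}}\prod_{e}\binom{\current_e}{\current_{1,e}'}.
\]

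Next I would interpret the binomial-weighted sums combinatorially: view the multigraph $M$ obtained from $\current$ by splitting edge $e$ into $\current_e$ parallel copies, so that each side counts, with weight one, the sub-multigraphs of $M$ whose odd-degree vertex set equals $A$, respectively $A\symmdiff B$. The vanishing when $\mathfrak{E}_A(\widehat{\current})$ fails is then immediate, since a sub-multigraph has an even number of odd-degree vertices in each connected component of the ambient graph, and $M$ shares its components with $\widehat{\current}$. When $\mathfrak{E}_A(\widehat{\current})$ holds, the fact that $A\symmdiff B=\partial\current$ automatically satisfies $\mathfrak{E}_{A\symmdiff B}$, combined with the parity identity $|A\cap C|+|B\cap C|\equiv|(A\symmdiff B)\cap C|\pmod 2$, yields $\mathfrak{E}_B(\widehat{\current})$. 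I would then fix a spanning forest of $\widehat{\current}$, lift it to $M$ by choosing one parallel copy per forest edge, pair up the vertices of $B$ inside each connected component, and XOR the unique forest-paths between paired vertices to produce $\tau\leq M$ with $\partial\tau=B$. The involution $\current_1\mapsto\current_1\symmdiff\tau$ on sub-multigraphs of $M$ is then weight-preserving and, because $\partial(\current_1\symmdiff\tau)=\partial\current_1\symmdiff B$, bijects the source-$A$ class onto the source-$(A\symmdiff B)$ class, establishing the required combinatorial identity.

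The main obstacle is the clean construction of $\tau$ as an honest sub-multigraph of the expanded graph $M$ rather than merely of $\widehat{\current}$: one must keep the two objects distinct so that pair-ups in distinct components can be lifted simultaneously and the subsequent symmetric differences remain within $M$. Once $\tau$ is in hand, the involution is manifestly weight-preserving (each sub-multigraph of $M$ has weight one), and reassembling the outer sum over $\current\in\currentSet{G}{A\symmdiff B}$ finishes the proof.
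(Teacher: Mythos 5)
Your argument is correct and is the standard proof of the Switching Lemma: reduce to a counting identity over sub-multigraphs of the expanded multigraph $M$ via the multinomial factorization $\currentw{\current_1}\currentw{\current_2}=\currentw{\current}\prod_e\binom{\current_e}{\current_{1,e}}$, dispose of the vanishing case by the handshaking lemma on each component, and in the non-trivial case realize the identity via the weight-preserving involution $\current_1\mapsto\current_1\symmdiff\tau$ for a fixed $\tau\leq M$ with $\partial\tau=B$, built by lifting a spanning forest and XOR-ing forest-paths for a pairing of $B$ within each component. Note that the paper itself does not prove this lemma — it cites Duminil-Copin's lecture notes — so there is no in-paper proof to compare against; your proposal reproduces the standard argument, with the XOR-of-forest-paths construction correctly generalizing the usual two-source version ($|B|=2$, one path) to arbitrary $B$.
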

This remarkable property makes the random-current representation particularly well suited to analyze truncated correlation functions. In particular, an application of Lemma~\ref{lem:switching} yields, for any \(A,B\subset G\),
\begin{equation}\label{eq:CovABRC}
	\isingLaw{\sigma_A;\sigma_B}{}
	=
	\frac{\currentPF{A\symmdiff B}{} \currentPF{\emptyset}{} - \currentPF{A}{}\currentPF{B}{}}{\currentPF{\emptyset}{}^2}
	=
	\frac{\currentPF{A\symmdiff B}{} \currentPF{\emptyset}{} \{\EvPart{A}^{\,\comp}\}}{\currentPF{\emptyset}{}^2}.
\end{equation}

\smallskip
Finally, we introduce a probability measure on $\currentSet{G}{A}$ via
\[
\currentLaw{A}{G}(\current)
=
\frac	{ \currentw{\current} }
		{ \currentPF{A}{G} } ,
\]
and write, for \(A,B\subset V_G\),
\[
\currentLaw{A\otimes B}{G}(\current_1,\current_2)
=
\currentLaw{A}{G}(\current_1) \currentLaw{B}{G}(\current_2) .
\]

\subsubsection{High-temperature representation}
\label{sec:HTrepr}

For any $A\subset V_G$ with \(|A|\) even, using the identity $e^{\beta J_e\sigma_i\sigma_j} = \cosh(\beta J_e) \bigl(1+\sigma_i\sigma_j\tanh(\beta J_e)\bigr)$, we can write
\begin{align*}
	\sum_{\sigma\in\isingSet{G}}& \sigma_A \prod_{e=\{i,j\}\in E_G} e^{\beta J_e \sigma_i\sigma_j}\\
	&=
	\Bigl\{\prod_{e=\{i,j\}\in E_G} \cosh(\beta J_e)\Bigr\} \sum_{\evGraph\subset E_G}\sum_{\sigma\in\isingSet{G}} \prod_{i\in V_G}(\sigma_i)^{\mathds{1}_A(i)+|\setof{j}{\{i,j\}\in\evGraph}|} \prod_{e\in\evGraph}\tanh(\beta J_e)\\
	&=
	2^{|V_G|} \Bigl\{\prod_{e=\{i,j\}\in E_G} \cosh(\beta J_e)\Bigr\} \sum_{\evGraph\in\evGraphSet{G}{A}} \prod_{e\in\evGraph} \tanh(\beta J_e),
\end{align*}
where $\evGraphSet{G}{A}$ is the set of subgraphs $(V_G,E)$ of $G$ having the property that every vertex not in $A$ has even degree while every vertex in $A$ has odd degree. Correlation functions can then be expressed as
\[
\isingLaw{\sigma_A}{G}
=
\frac
	{\HTPF{A}{G}}
	{\HTPF{\emptyset}{G}},
\]
where we introduced
\[
	\HTPF{A}{G} = \sum_{\evGraph\in\evGraphSet{G}{A}} \prod_{e\in\evGraph} \tanh(\beta J_e) .
\]
We define an associated probability measure $\evGraphLaw{A}{G}$ on $\evGraphSet{G}{A}$ by
\[
	\evGraphLaw{A}{G}(\evGraph)
	=
	\frac
		{\prod_{e\in\evGraph} \tanh(\beta J_e)}
		{\HTPF{A}{G}} .
\]

\begin{remark}\label{rem:couplingRCHT}
Note that it immediately follows from the definitions that
\[
	\currentLaw{A}{G}(\current_e \text{ is odd if and only if }e\in\evGraph) = \evGraphLaw{A}{G}(\evGraph)
\]
for all subgraphs \(\evGraph\subset G\).
\end{remark}

Given \(u\in V_G\), we denote by \(E_G^u\) the set of all edges of \(E_G\) incident at \(u\). At each vertex \(u\), we fix an arbitrary total ordering on the edges of \(E_G^u\).

Given a vertex \(x\in V_G\) and a configuration \(\evGraph\in\evGraphSet{G}{A}\), we extract a path \(\gamma\) starting from \(x\) and ending at another vertex of \(A\) using the following algorithm (writing $|A|=m$):

\begin{algorithm}[H]
\DontPrintSemicolon
\label{alg:HTpaths}
	let $\gamma=\emptyset$ and $v = x$ \;
	\Repeat{$v\in A\setminus\{x\}$}{
		let $e=\{v,u\}$ be the smallest edge in $\evGraph\cap E_G^v$ that does not belong to \(\gamma\)\;
		update \(\gamma = \gamma \sqcup e\) \tcp*{\(\sqcup\) denotes concatenation}
		update $v=u$\;
	}
\caption{Extracting a path \(\gamma\) starting at \(x\in A\) from \(\evGraph\)}
\end{algorithm}

\begin{figure}
\resizebox{10cm}{!}{\input{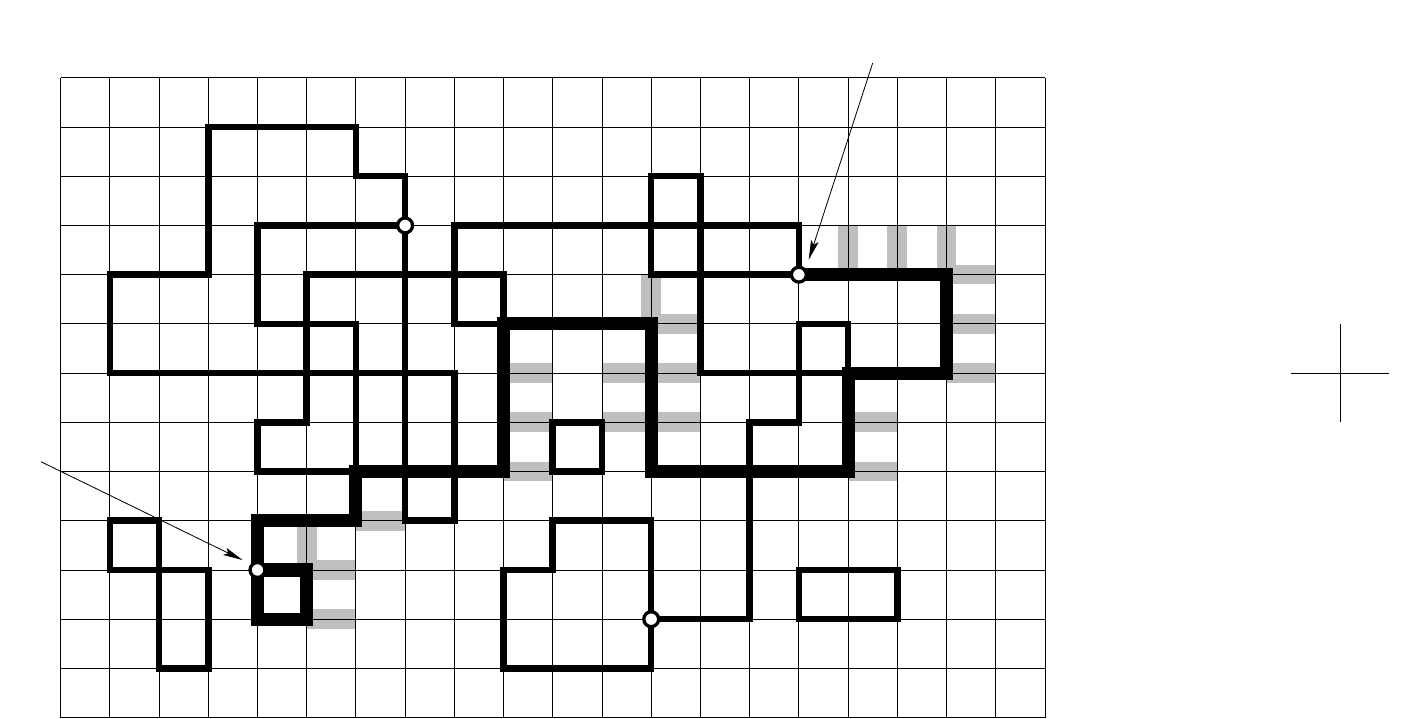_t}}
\caption{A high-temperature configuration, with \(4\) vertices of odd degree. The path from \(x\) to \(y\) obtained by applying Algorithm~\ref{alg:HTpaths} is indicated by bold edges; the edges that need to be added to obtain the corresponding edge-boundary are shaded. The order used on the edges incident at each vertex is indicated on the right.}
\label{fig:HTpath}
\end{figure}

\smallskip
If \(\gamma=(x_0,x_1,\ldots,x_m)\), we define its edge-boundary by
\[
	[\gamma] = \bigcup_{k=0}^{m-1} \setof{e\in E_G^{x_k}}{e\leq e_k} ,
\]
where we have written \(e_k=\{x_{k},x_{k+1}\}\) for the edges of the path.

Given \(x,y\in A\), let us denote by \(\Path{x,y}\) the set of all paths in \(G\) starting at \(x\), ending at \(y\) that can be obtained using Algorithm~\ref{alg:HTpaths} above.
Given \(\gamma\in\Path{x,y}\), we denote by \(\{\gamma: x\rightarrow y\}\) the event that Algorithm~\ref{alg:HTpaths} (started at \(x\)) yields the path \(\gamma\) and by \(G[\gamma]\) the graph with edges \(E_G\setminus [\gamma]\) (and vertices given by the endpoints of these edges).
Then, one can easily check that
\[
	\evGraphLaw{A}{G}(\gamma: x\rightarrow y)
	=
	\frac{\HTPF{A\setminus\{x,y\}}{G[\gamma]}}{\HTPF{A}{G}}\, \prod_{e\in\gamma} \tanh(\beta J_e).
\]

The following upper bound will be useful in our analysis.
\begin{lemma}\label{lem:HTPathMonotonicity}
Let \(A\subset V_G\) be even. Let \(x,y\) be two distinct vertices of \(A\) and let \(\gamma\in\Path{x,y}\). Then,
\[
	\evGraphLaw{A}{G}(\gamma: x\rightarrow y)
	\leq
	\evGraphLaw{\{x,y\}}{G}(\gamma: x\rightarrow y) .
\]
\end{lemma}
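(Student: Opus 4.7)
The plan is to plug both sides of the desired inequality into the explicit formula
\[
\evGraphLaw{C}{G}(\gamma: x\rightarrow y) = \frac{\HTPF{C\setminus\{x,y\}}{G[\gamma]}}{\HTPF{C}{G}}\,\prod_{e\in\gamma}\tanh(\beta J_e)
\]
displayed just before the lemma, and to cancel the common factor $\prod_{e\in\gamma}\tanh(\beta J_e)$. After cross-multiplying (all quantities are strictly positive), the claim becomes
\[
\HTPF{A\setminus\{x,y\}}{G[\gamma]} \cdot \HTPF{\{x,y\}}{G} \leq \HTPF{\emptyset}{G[\gamma]} \cdot \HTPF{A}{G}.
\]

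Dividing both sides by $\HTPF{\emptyset}{G[\gamma]} \cdot \HTPF{\emptyset}{G}$ and using $\isingLaw{\sigma_C}{H} = \HTPF{C}{H}/\HTPF{\emptyset}{H}$, this is in turn equivalent to
\[
\isingLaw{\sigma_{A\setminus\{x,y\}}}{G[\gamma]} \cdot \isingLaw{\sigma_x\sigma_y}{G} \leq \isingLaw{\sigma_A}{G}.
\]
The remaining task is to produce this inequality from the Griffiths (GKS) inequalities. First, since $G[\gamma]$ is obtained from $G$ by setting the couplings $J_e$, $e\in[\gamma]$, to $0$, the monotonicity of ferromagnetic Ising correlations in the coupling constants---a direct consequence of GKS~II via $\partial_{J_e}\isingLaw{\sigma_C}{} = \beta\isingLaw{\sigma_C;\sigma_i\sigma_j}{}\geq 0$---yields
\[
\isingLaw{\sigma_{A\setminus\{x,y\}}}{G[\gamma]} \leq \isingLaw{\sigma_{A\setminus\{x,y\}}}{G}.
\]
Second, since $\{x,y\}$ and $A\setminus\{x,y\}$ are disjoint, one has $\sigma_{A\setminus\{x,y\}}\sigma_x\sigma_y = \sigma_A$ identically, and GKS~II applied directly on $G$ gives $\isingLaw{\sigma_{A\setminus\{x,y\}}}{G}\,\isingLaw{\sigma_x\sigma_y}{G} \leq \isingLaw{\sigma_A}{G}$. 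Chaining these two inequalities concludes the proof.

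The argument is essentially mechanical, so I do not anticipate any genuine obstacle. The only non-routine point is spotting that the two ratios of HT partition functions arising on each side both rewrite as Ising correlations (one on the reduced graph $G[\gamma]$, one on the full graph $G$), which opens the door to GKS; once this translation is made, the remaining two-step GKS chain is immediate.
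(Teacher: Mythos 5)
Your proof is correct and is essentially the paper's own argument: both reduce the claim to the inequality $\isingLaw{\sigma_{A\setminus\{x,y\}}}{G[\gamma]}\,\isingLaw{\sigma_x\sigma_y}{G}\leq\isingLaw{\sigma_A}{G}$ and then chain the same two GKS inequalities (monotonicity in couplings, plus $\isingLaw{\sigma_{A\setminus\{x,y\}}}{G}\isingLaw{\sigma_x\sigma_y}{G}\leq\isingLaw{\sigma_A}{G}$); the only difference is cosmetic, namely that the paper factorizes the ratio $\HTPF{A\setminus\{x,y\}}{G[\gamma]}/\HTPF{A}{G}$ into three pieces while you cross-multiply and divide. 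One tiny point the paper makes explicit and you leave implicit: to use the displayed formula for $\evGraphLaw{\{x,y\}}{G}(\gamma:x\to y)$ one needs $\gamma\in\Path[\{x,y\}]{x,y}$, which follows from the inclusion $\Path{x,y}\subset\Path[\{x,y\}]{x,y}$.
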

\begin{proof}
Let us write
\[
	\frac{\HTPF{A\setminus\{x,y\}}{G[\gamma]}}{\HTPF{A}{G}}
	=
	\frac{\HTPF{A\setminus\{x,y\}}{G[\gamma]}}{\HTPF{\emptyset}{G[\gamma]}} \,
	\frac{\HTPF{\emptyset}{G[\gamma]}}{\HTPF{\emptyset}{G}} \,
	\frac{\HTPF{\emptyset}{G}}{\HTPF{A}{G}}	.
\]
Now, observe first that, by GKS inequalities,
\[
	\frac{\HTPF{A\setminus\{x,y\}}{G[\gamma]}}{\HTPF{\emptyset}{G[\gamma]}}
	=
	\isingLaw{\sigma_{A\setminus\{x,y\}}}{G[\gamma]}
	\leq
	\isingLaw{\sigma_{A\setminus\{x,y\}}}{G}.
\]
Then, again by GKS,
\[
	\frac{\HTPF{A}{G}}{\HTPF{\emptyset}{G}}
	=
	\isingLaw{\sigma_A}{G}
	\geq
	\isingLaw{\sigma_{A\setminus\{x,y\}}}{G} \,
	\isingLaw{\sigma_x \sigma_y}{G}	.
\]
Putting this together, we obtain
\[
	\frac{\HTPF{A\setminus\{x,y\}}{G[\gamma]}}{\HTPF{A}{G}}
	\leq
	\frac{1}{\isingLaw{\sigma_x \sigma_y}{G}}
	\frac{\HTPF{\emptyset}{G[\gamma]}}{\HTPF{\emptyset}{G}}
	=
	\frac{\HTPF{\emptyset}{G[\gamma]}}{\HTPF{\{x,y\}}{G}} .
\]
The claim follows, since \(\Path{x,y}\subset\Path[\{x,y\}]{x,y}\).
\end{proof}

We will denote by $\randPathLaw^{x,y}_G$ the distribution of the path extracted from HT configurations with sources $\{x,y\}$ (that is, the pushforward measure of $\evGraphLaw{\{x,y\}}{G}$ by the mapping induced by Algorithm~\ref{alg:HTpaths}) and by $\randPathLaw^{\{x,y\}\otimes \{u,v\}}_G$ the distribution of two independent such paths extracted from independent configurations with sources $\{x,y\}$ and $\{u,v\}$.

\subsubsection{Random-cluster representation}
The random-cluster (or FK) measure associated to the Ising model is obtained by the following expansion (remember that \(\EvPart{A}\) is the event that each connected component of \(\omega\) contains an even number of vertices of \(A\) (possibly \(0\))):
\begin{align*}
	\sum_{\sigma\in\{\pm 1\}^G} \sigma_A e^{\beta\sum_{e=\{i,j\}}J_e\sigma_i\sigma_j }
	&=
	e^{-\beta\sum_{e} J_e}\sum_{\sigma\in\{\pm 1\}^G} \sigma_A \prod_{e=\{i,j\}}\bigl( (e^{2\beta J_e}-1)\delta_{\sigma_i,\sigma_j} +1\bigr)\\
	&=
	C_{\beta,J}\sum_{\omega\subset E_G} 2^{\kappa(\omega)} \mathds{1}_{\EvPart{A}}(\omega) \prod_{e\in\omega}(e^{2\beta J_e}-1),
\end{align*}
where $\kappa_G(\omega)$ is the number of connected components of $(V_G,\omega)$ (each isolated vertex thus counting as a connected component).

Defining a probability measure \(\mathbb{P}_{\beta,G}\) on subsets \(\omega\subset E_G\) by
\[
\RCMLaw{\omega}{\beta,G}
=
\frac
	{2^{\kappa_G(\omega)} \prod_{e\in\omega}(e^{2\beta J_e}-1)}
	{\FKPF{G}},
\]
one obtains
\begin{equation}\label{eq:FKsigmaA}
\isingLaw{\sigma_A}{G}
=
\RCMLaw{\EvPart{A}}{\beta,G}.
\end{equation}

An event \(\calA\) is said to be increasing if \(\omega\in\calA\) implies that \(\omega'\in\calA\) for any \(\omega'\supset\omega\).
An important feature of this representation is that the FKG inequality holds~\cite{Duminil-Copin-2017,Friedli+Velenik-2017}: if \(\calA,\calB\) are two increasing events, then
\(
\RCMLaw{\calA\cap\calB}{\beta,G}
\geq
\RCMLaw{\calA}{\beta,G} \RCMLaw{\calB}{\beta,G} .
\)

%
%
\subsection{Notations, conventions}

In the sequel, we make the following conventions:
\begin{itemize}
\item \(c_1, c_2,\dots\) will denote generic (positive, finite) constants, whose value can change from place to place (even in consecutive lines);
\item \(\norm{\cdot}\) will denote the Euclidean norm;
\item to lighten notations, when a quantity should be an integer but the corresponding expression yields a non-integer, we will implicitly assume that the integer part has been taken;
\item we write occasionally \(\p(A,B)\) instead of \(\p(A\cap B)\);
\item if \(E\) is a set of edges and \(i\) a vertex, the notation \(i\in E\) will mean that at least one edge in \(E\) has \(i\) as an endpoint.
\end{itemize}
Moreover, we will always work in finite volume. More precisely, the expectations in Theorem~\ref{thm:OZ-even-even} will be computed under the finite-volume Gibbs measure with free boundary condition on the graph \(G=(V_G,E_G)\) with \(V_G= V_G(N) = \setof{x=(x_1,\dots,x_d)\in\Zd}{\max_i |x_i| \leq N}\) and \(E_G = E_G(N) = \setof{\{i,j\}\subset V_N}{J_{j-i}\neq 0}\).
We will assume that \(N\gg n\) (say, \(N=n^2\)); exponential decay of correlations then implies that all our estimates below are uniform in \(N\) and the thermodynamic limit \(N\uparrow\infty\) can be taken in the end.

Also, by symmetry, we can (and will) suppose, without loss of generality, that the unit vector \(\uvec=(u_1,\ldots,u_d)\in\Rd\) appearing in the statement of Theorem~\ref{thm:OZ-even-even} satisfies \(u_1 \geq \abs{u_k}\) for all \(k=2,\ldots,d\).

Finally, to lighten notation, we will write \(\nB = B + n\uvec\). The unit vector \(\uvec\) and the value of \(n\) will be kept fixed and are thus not indicated explicitly. \(n\) will be assumed to be large.

%
%

\subsection{Coupling with directed random walks}

\label{sec:RWrep}

In this section, we briefly summarize results from~\cite{Campanino+Ioffe+Velenik-2003, Campanino+Ioffe+Velenik-2008, Ott+Velenik-2017} that provide random-walk representations for both paths extracted from the HT expansion and for long subcritical clusters in the FK representation.

\subsubsection{The basic Ornstein-Zernike coupling}\label{sec:OZcoupling}
Fix \(x,x'\in\Zd\) and a unit-vector \(\uvec\in\Rd\), and set \(y=x'+[n\uvec]\).

Fix some \(\delta\in(0,1/\sqrt{2})\) and let \(\fcone = \setof{t\in\Rd}{\langle t, \eone\rangle \geq \delta\norm{t}}\) and \(\bcone=-\fcone\) be ``forward'' and ``backward'' cones with apex at \(0\), an aperture strictly larger than \(\pi/2\) (specified by \(\delta\)) and axis given by the first coordinate axis. Given \(v\in\Rd\), denote by \(\fcone_v=v+\fcone\), \(\bcone_v=v+\bcone\) and set \(\diam(v,w) = \fcone_v \cap\bcone_w\) for any \(w\in\fcone_v\).

Denote
\begin{gather*}
	\calA = \bigcup\limits_{v\in \fcone\cap\Zd} \{ (C,v):\ C\ni 0,v,\ C \text{ connected},\ C\subset\diam(0,v)\}\\
	\calBb = \bigcup\limits_{v\in \fcone\cap\Zd} \{ (C,v):\ C\ni 0,v,\ C \text{ connected},\ C\subset\bcone_v\}\\
	\calBf = \bigcup\limits_{v\in \fcone\cap\Zd} \{ (C,v):\ C\ni 0,v,\ C \text{ connected},\ C\subset\fcone\},
\end{gather*}
the \(C\) component is the ``cluster'' part while \(v\) is the \emph{displacement} of \(C\). Denote \( X=((B_L,v_L),(B_R,v_R),(C_1,v_1),(C_2,v_2),\dots) \) an element of \(\calBb\times \calBf\times\calA^{\Z_{\geq 1}}\). For such an \(X\), \(m\geq 0\) and \(x\in\Zd\), one can create a cluster connecting \(x\) to \(x+v_L+v_R+\sum_{k=1}^{m}v_i\) by looking at:
\[
C_x(m,X)=(x+B_L)\cup (x+v_L+C_1)\cup (x+v_L+v_1+ C_2)\cup\dots\cup (x+v_L+\sum_{k=1}^{m}v_i + B_R) .
\]
Then define the following event and function on \(\calBb\times \calBf\times\calA^{\Z_{\geq 1}}\):
\begin{gather*}
	R_x(y) = \{X:\ \exists m\geq 1: x+v_L+v_R+\sum_{k=1}^{m} v_k = y \}\\
	M_{y}(X) = \max\{m\geq 0:\sum_{k=1}^{m}(v_i)_1\leq y_1-(v_L)_1-(v_R)_1 \}.
\end{gather*}
Notice that \(M_{y-x}(X)\geq 1\) when \(X\in R_x(y) \).
Following~\cite{Campanino+Ioffe+Velenik-2003, Campanino+Ioffe+Velenik-2008} and~\cite[Section~4 and Appendix~C]{Ott+Velenik-2017}, one can construct  a probability measure \(p\) on \(\calA\) depending on \(u\) and two finite measures
\(\rhoL\) on \(\calBb\) and \(\rhoR\) on \(\calBf\) (depending on \(x,x'\) and \(\uvec\), recall \(y=\uvec n + x'\)\footnote{We emphasize that the constructions in~\cite{Campanino+Ioffe+Velenik-2003, Campanino+Ioffe+Velenik-2008, Ott+Velenik-2017} are actually explicit (in particular the measures and the coupling discussed here), but we only formulate the results in the form we need for the present work.}), for which the following holds:
\begin{enumerate}[label=P\arabic*]
	\item \label{it:expDecSteps} there exists \(\Cl{expDecSteps}>0\) such that:
	\[
	\rhoL(\norm{v}\geq k)\vee \rhoR(\norm{v}\geq k)\vee p(\norm{v}\geq k)\leq e^{-\Cr{expDecSteps}k},
	\]
	for \(k\) large enough; \(\Cr{expDecSteps}\) can be chosen to be uniform over \(x,y\).
	\item \label{it:expDecTVdist} Denoting \(\Xi^{y}_{x} \) the product measure \(\rhoL\times\rhoR\times p^{\Z_{\geq 1}}\) conditioned on \(R_x(y)\), there exists \(\Cl{expDecTVdist}>0\) such that:
	\[
	d_{TV}\bigl(\Xi^y_x(\ \cdot\ ), \RCMLaw{\ \cdot\ \given x \leftrightarrow y}{\beta} \bigr)\leq e^{-\Cr{expDecTVdist} n},
	\]
	provided that \(n\) be large enough, where \(\Xi^y_x(\ \cdot\ )\) is understood as a measure on the cluster \(C_x(M_{y-x}(X),X)\).
	\item \label{it:finEnergEone} \(p(v=\eone)>0\) and \(p(v=(2,z))>0\) for all \(z\in\Z^{d-1}\) with \(\normI{z}=1\).
	\item \label{it:finEnergBC} Let \((C_L,v_L)\in\calBb\). If there exists \(c=c(\beta,d)>0\) such that
	\[
		\RCMLaw{C_{xy}\cap \bcone_{x+v_L}=x+C_L\given x \leftrightarrow y}{\beta}\geq c,
	\]
	then there exists \(c'=c'(\beta,d)>0\) such that \[\rhoL\bigl((C,v)=(C_L,v_L)\bigr)\geq c'.\] The same holds for \((C_R,v_R)\in\calBf\) under \(\rhoR\).
\end{enumerate}

Relevant statements in~\cite{Ott+Velenik-2017} for those properties are: Theorem C.4 for Item~\ref{it:expDecSteps}, Lemma C.1 and Theorem C.4 for Item~\ref{it:expDecTVdist}, Lemma C.3 and Theorem C.4 for Item~\ref{it:finEnergEone} and Remark C.1 for Item~\ref{it:finEnergBC}.

We will denote \(C_{x,y}\equiv C_x(M_{y-x}(X),X)\).

\begin{remark}
	\label{rem:coneConfinement}
	It follows from the above properties that:
	\begin{itemize}
		\item Given \(v_L,v_R\), \(\Xi_{x}^y(\ \cdot\ \given v_L,v_R )\) is the law of a directed random walk with steps sampled according to \(p\), conditioned to go from \(x+v_L\) to \(y-v_R\).
		\item Given the displacements \((v_L,v_1,\dots,v_{M_{y-x}},v_R)\), the cluster \(C_x(M_{y-x}(X),X)\) obtained from \(\Xi_x^y\) is contained in the diamond cover \(\bcone_{x+v_L}\cup\diam(x+v_{L},x+v_L+v_1)\cup\dots\cup\fcone_{y-v_R} \) (see Figure~\ref{fig:EffRW}).
	\end{itemize}
\end{remark}

\begin{figure}[t]
	\centering
	\resizebox{.8\textwidth}{!}{\input{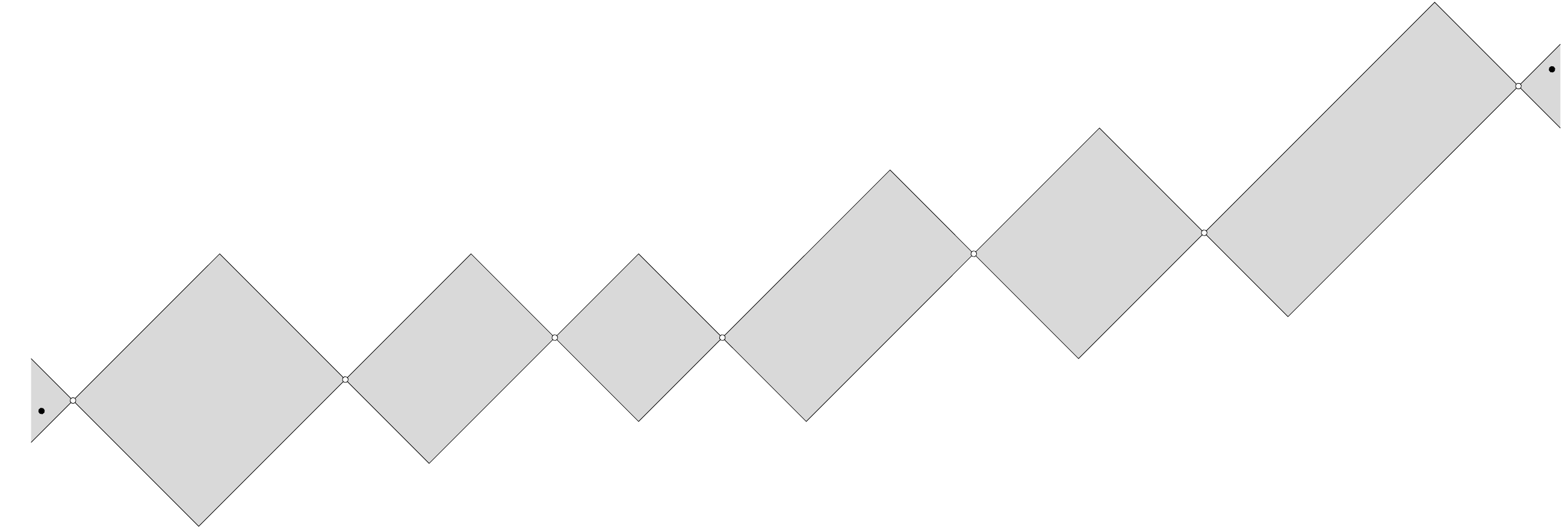_t}}
	\caption{Typical realizations of the cluster \(C_{xy}\) of \(x\) and \(y\) under $\p_\beta(\cdot\given x \leftrightarrow y)$ (or of the HT random path connecting \(x\) and \(y\))
		can be decomposed into a concatenation of smaller pieces contained in ``diamonds'' (plus two boundary pieces contained, respectively, in \(\bcone_{x_0}\) and in \(\fcone_{x_M}\)).
	}
	\label{fig:EffRW}
\end{figure}

\smallskip
The same holds replacing the clusters by diamond-contained paths extracted from the HT representation, albeit with different measures \(\rhoL, \rhoR\) and \(p\). For simplicity, we will use the same notation in both cases, as the actual form of these measures plays no role in our analysis.

\smallskip
We will denote by \(\walkLaw{u}\) the distribution of the (directed) random walk \((\walk_k)_{k\geq 0}\) on \(\Zd\) with \(\walk_0=u\) and transition probabilities given by \(p(\cdot)\) (understood as a measure on the displacement). We will also write \(\walkLaw{u}^v\) the law of this walk conditioned on hitting \(v\).

As the walk \((\walk_k)_{k\geq 0}\) is directed, we will interpret the first coordinate axis as the ``time coordinate''. In this way, the walk becomes a space-time walk, and we will write $\walk_k=(\walk^{\parallel}_k,\walk^{\perp}_k)\in\bbZ\times\bbZ^{d-1}$.

The properties of the measure \(p\) guarantee that the increments of the random walk have exponential tails and that both the random walk \((\walk_k^\perp)_{k\geq 0}\) and the renewal process \((\walk_k^\parallel)_{k\geq 0}\) are aperiodic. Moreover, Property~\ref{it:finEnergEone} implies the irreducibility of \((\walk_k^\perp)_{k\geq 0}\) and the fact that \((\walk_k^\parallel)_{k\geq 0}\) can reach any time value with positive probability.

%
%

\subsubsection{Synchronized random walks}
For our purposes in this paper, we will need to consider two independent walks.
Let \(x,z\in\Zd\), with \(x\neq z\) and denote by \(\walkLaw{x\otimes z}\) the joint distribution of two independent random walks \(\walk\) and \(\walk'\) as above, starting respectively at \(x\) and \(z\). Denote also \(\walkLaw{x\otimes z}^{y\otimes w}\) the law of \(\walk,\walk'\) started at \(x\), resp.\ \(z\), and conditioned to hit \(y\), resp.\ \(w\).
It will be convenient to ``synchronize'' the two walks. Namely, let \(I=\setof{i\in\Z_{\geq 0}}{\exists j\in\Z_{\geq 0} \text{ s.t. } \walk_{i}^\parallel = (\walk'_{j})^\parallel}\) and
\(I'=\setof{j\in\Z_{\geq 0}}{\exists i\in\Z_{\geq 0} \text{ s.t. } \walk_{i}^\parallel = (\walk'_{j})^\parallel}\). We order the elements of \(I\) and \(I'\) into two increasing sequences \((i_k)_{k\geq 0}\) and \((j_k)_{k\geq 0}\). We can then define two new random walks by (see Fig.~\ref{fig:diffRW})
\[
\tilde{\walk}_k = \walk_{i_k},  \quad\text{ and }\quad   \tilde{\walk}'_k = \walk'_{j_k}.
\]

\medskip
Under \(\walkLaw{x\otimes z}^{y\otimes w}\), we will use the notation \(\#\) to denote the (random) number of steps of the finite trajectories of the synchronized walks. Notice that, by exponential decay of steps, there exists \(\delta>0\) such that this number is at least \(\delta\min(|z_1-x_1|,|y_1-w_1|)\) with probability at least \(1-e^{-c\min(|z_1-x_1|,|y_1-w_1|)}\) for some \(c>0\).

Moreover, we will write
\[
D(\tilde{\walk}) = \bigcup_{k=1}^{\#} \diam(\tilde{\walk}_{k-1},\tilde{\walk}_k)
\qquad\text{and}\qquad
D(\tilde{\walk}') = \bigcup_{k=1}^{\#} \diam(\tilde{\walk}'_{k-1},\tilde{\walk}'_k).
\]
Note that, by construction, the confinement property (Remark~\ref{rem:coneConfinement}) still holds for the synchronized random walks, that is, if \(C,C'\) denote the clusters marginal under \(\Xi_x^y\times\Xi_z^w \),
\begin{equation}\label{eq:DoubleConeConfinement}
C \subset \bcone_{\tilde{\walk}_0} \cup D(\tilde{\walk}) \cup \fcone_{\tilde{\walk}_{\#}}
\qquad\text{and}\qquad
C' \subset \bcone_{\tilde{\walk}'_0} \cup D(\tilde{\walk}') \cup \fcone_{\tilde{\walk}'_{\#}},
\end{equation} where \(\tilde{\walk},\tilde{\walk}'\) are the synchronization of the trajectories marginal of \(\Xi_x^y\times\Xi_z^w \).

\begin{figure}[t]
\resizebox{!}{5cm}{\input{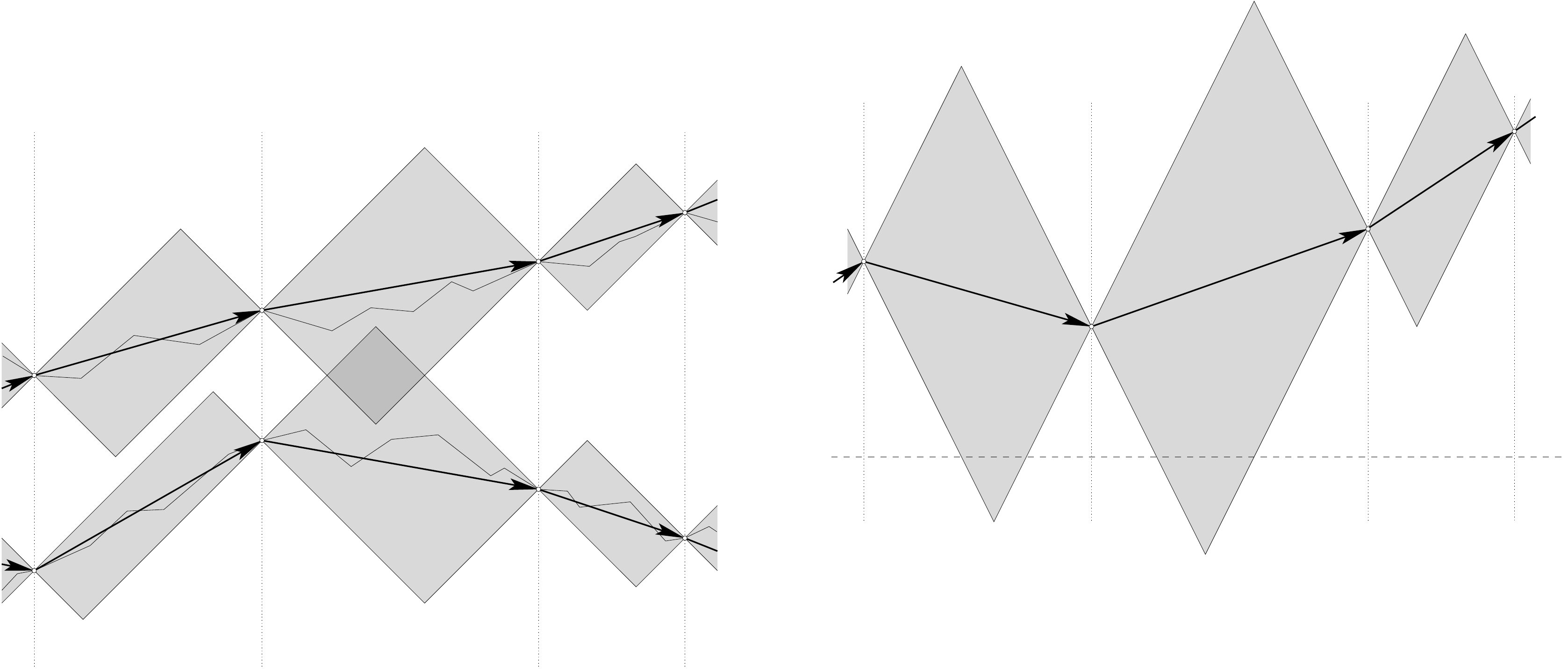_t}}
\caption{Left: The synchronized random walks \(\tilde{\walk}\) and \(\tilde{\walk}'\), drawn with the associated diamonds containing the relevant microscopic object. Right: the corresponding difference walk \(\check{\walk}\). Observe that a necessary, but not sufficient, condition for two of the original diamonds to intersect is that the corresponding enlarged diamond associated to \(\check{\walk}\) (corresponding to the value \(2\delta\)) intersects the first coordinate axis (represented by the dashed line). (The picture is two-dimensional, but this observation is true in general.)}
\label{fig:diffRW}
\end{figure}

\subsubsection{Difference random walk}\label{sec:DiffRW}
Let us denote by \(\diffSyncWalkLaw_u\) the distribution of the random walk \((\check{\walk}_k)_{k\geq 0}\) defined by \(\check{\walk}_k^\perp = \tilde{\walk}_k^\perp-(\tilde\walk'_k)^\perp\) and \(\check{\walk}_k^\parallel = \tilde{\walk}_k^\parallel = (\tilde\walk'_k)^\parallel\), starting at \(\tilde{\walk}_0-\tilde\walk'_0=u\in\Zd\), and by \(\check{X}_k\) its increments (see Fig.~\ref{fig:diffRW}). It then follows from the above properties that (see~\cite{IV2012} for a similar construction)
\begin{enumerate}
	\item\label{DRW-DecayProp} \(\exists c>0\) such that, \(\forall x,z\in\Zd\), \(\walkLaw{x\otimes z}(\norm{\check\walk_0-(x^\parallel\vee z^\parallel, x^\perp-z^\perp)} \geq t) \leq e^{-c t}\);
	\item \((\check X_k)_{k\geq 1}\) are i.i.d.\ random variables with exponential tail;
	\item \((\check{X}_k^\parallel,\check{X}_k^\perp) \stackrel{\text{law}}{=} (\check{X}_k^\parallel,-\check{X}_k^\perp)\);
	\item\label{DRW-ConeProp} \(\check{X}_k^\parallel\geq\tfrac\delta2\norm{\check{X}_k^\perp}\) almost surely;
	\item \((\check{\walk}_k^{\perp})_{k\geq 0}\) is irreducible and aperiodic and \((\check{\walk}_k^{\parallel})_{k\geq 0}\) is aperiodic and can reach any times larger that its starting time with positive probability.
\end{enumerate}
To shorten notation, we will write \(\diffSyncWalkLaw_{x}\equiv\diffSyncWalkLaw_{(0,x)}\) with \(x\in\Z^{d-1}\).

\subsubsection{Some notations}\label{sec:NotationsRW}
Let
\[
\tau_0^\perp = \inf\setof{k\geq 1}{\diffSyncWalk_k^\perp=0}
\qquad
\text{ and }
\qquad
\tau_n^\parallel = \inf\setof{k\geq 1}{\diffSyncWalk_k^\parallel\geq n} .
\]
We first introduce a few events that will be important in our analysis. Given \(n\in\bbZ_{\geq 0}\) and \(y\in\bbZ^{d-1}\), we set
\begin{gather*}
\calP_n(y) = \{\exists k\geq 0:\; \diffSyncWalk_k=(n,y)\},\\
\calQ_n(y) = \{\exists k\geq 0:\; \diffSyncWalk_k=(n,y) \text{ and } \tau_0^\perp \geq k \},\\
\calR_n = \{\diffSyncWalk_{\tau_0^\perp}^{\parallel} \geq n \}.
\end{gather*}
The corresponding probabilities will be denoted
\begin{gather*}
p_n(x,y) = \diffSyncWalkLaw_{x}\bigl( \calP_n(y) \bigr),\qquad
q_n(x,y) = \diffSyncWalkLaw_{x}\bigl( \calQ_n(y) \bigr),\qquad
r_n(x) = \diffSyncWalkLaw_{x}\bigl( \calR_n \bigr).
\end{gather*}
The asymptotic behavior of these quantities (as \(n\to\infty\), with \(x=y=0\)) is discussed in Appendix~\ref{app:RW}.

\smallskip
Let us stress that, in the above definitions, the index \(n\) denotes a distance along the first coordinate axis and not a number of steps. This creates some (minor) complications, and we will need to pass from one description to the other. For this reason, given $\ell\in \Hrange = \setof{\diffSyncWalk_k^{\parallel}}{k\geq 0}$, define $t_\ell$ to be such that $\diffSyncWalk_{t_\ell}^\parallel=\ell$.

\smallskip
Finally, let
\[
\psi_d(n) =
\begin{cases}
	n^{-1}				& \text{when } d=2,\\
	\log(n+1)^{-2}		& \text{when } d=3,\\
	1					& \text{when } d\geq 4,
\end{cases}
\]
and set \(\phi_d(n) = n^{-(d-1)/2} \psi_d(n)\).

%
%

\subsection{Proof of the upper bound}

In this section, we prove the upper bound in Theorem~\ref{thm:OZ-even-even}.
To this end, we will use the RC and the HT representations together with the associated coupling to a directed random walk.

We start by deriving an upper bound in terms of an event involving two independent realizations of the model.
\begin{lemma}
	\label{lem:UBdecoupledRW}
	\begin{equation}
		\label{eq:UBstart}
		\isingLaw{\sigma_\nA;\sigma_\nB}{}
		\leq
		\sum_{\substack{A_1\subset \nA,B_1\subset \nB\\|A_1|,|B_1| \text{\rm\ odd}}}
		\isingLaw{\sigma_{A_1}\sigma_{B_1}}{} \isingLaw{\sigma_{A_1^\comp}\sigma_{B_1^\comp}}{} \,
		\currentLaw{A_1\cup B_1\otimes A_1^\comp\cup B_1^\comp}{G}(A_1\nleftrightarrow A_1^\comp,B_1\nleftrightarrow B_1^\comp),
	\end{equation}
	where \(A_1^\comp = \nA\setminus A_1\), \(B_1^\comp = \nB\setminus B_1\) and the notation \(C \nleftrightarrow D\) means that the two sets \(C,D\subset V_G\) are not connected  in \(\widehat{\current_1+\current_2}\).
\end{lemma}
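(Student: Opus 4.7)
The starting point is identity~\eqref{eq:CovABRC}, which (since $\nA$ and $\nB$ are disjoint for $n$ large, so $\nA\symmdiff\nB=\nA\cup\nB$) reads
\[
	\isingLaw{\sigma_\nA;\sigma_\nB}{} = \frac{\currentPF{\nA\cup\nB}{}\,\currentPF{\emptyset}{}\{\EvPart{\nA}^{\,\comp}\}}{\currentPF{\emptyset}{}^2}.
\]
The plan is to decompose $\EvPart{\nA}^{\,\comp}$ canonically according to one connected component of $\widehat{\current_1+\current_2}$, and then to apply the Switching Lemma in the \emph{reverse} direction to re-route the sources into $A_1\cup B_1$ and $A_1^\comp\cup B_1^\comp$.

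For the decomposition, fix an arbitrary total order on $V_G$. On $\EvPart{\nA}^{\,\comp}$, at least one connected component of $\widehat{\current_1+\current_2}$ contains an odd number of vertices of $\nA$; let $\mathcal{C}^*$ denote the one containing the smallest vertex of $\nA$ whose component has this property, and set $A_1=\mathcal{C}^*\cap\nA$, $B_1=\mathcal{C}^*\cap\nB$. Since every connected component of $\widehat{\current_1+\current_2}$ must contain an even number of sources of $\current_1+\current_2$ (i.e.\ of $\nA\cup\nB$), $|A_1|$ odd forces $|B_1|$ odd. This produces the exact identity
\[
	\currentPF{\nA\cup\nB}{}\,\currentPF{\emptyset}{}\{\EvPart{\nA}^{\,\comp}\}
	=
	\sum_{\substack{A_1\subset\nA,\,|A_1|\text{ odd}\\ B_1\subset\nB,\,|B_1|\text{ odd}}}
	\currentPF{\nA\cup\nB}{}\,\currentPF{\emptyset}{}\bigl\{\mathcal{C}^*\cap\nA=A_1,\,\mathcal{C}^*\cap\nB=B_1\bigr\}
\]
(some terms may vanish). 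On each summand, two things are crucial: (i) $\widehat{\current_1+\current_2}\in\EvPart{A_1\cup B_1}$, because $\mathcal{C}^*$ contains exactly $|A_1|+|B_1|$ (which is even) vertices of $A_1\cup B_1$ and every other component contains none; and (ii) $A_1\nleftrightarrow A_1^\comp$ and $B_1\nleftrightarrow B_1^\comp$, since $A_1^\comp\cup B_1^\comp$ lies outside $\mathcal{C}^*$.

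Replacing the equality conditions $\mathcal{C}^*\cap\nA=A_1$, $\mathcal{C}^*\cap\nB=B_1$ by the weaker requirements (i)--(ii) yields an upper bound to which one can apply Lemma~\ref{lem:switching} backwards with source sets $A_1\cup B_1$ and $A_1^\comp\cup B_1^\comp$ (whose symmetric difference is $\nA\cup\nB$): this rewrites
\[
	\currentPF{\nA\cup\nB}{}\,\currentPF{\emptyset}{}\bigl\{\EvPart{A_1\cup B_1},\,A_1\nleftrightarrow A_1^\comp,\,B_1\nleftrightarrow B_1^\comp\bigr\}
	=
	\currentPF{A_1\cup B_1}{}\,\currentPF{A_1^\comp\cup B_1^\comp}{}\bigl\{A_1\nleftrightarrow A_1^\comp,\,B_1\nleftrightarrow B_1^\comp\bigr\},
\]
the non-connection indicator being preserved because it depends only on $\widehat{\current_1+\current_2}$. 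Dividing by $\currentPF{\emptyset}{}^2$, using $\currentPF{A_1\cup B_1}{}/\currentPF{\emptyset}{}=\isingLaw{\sigma_{A_1}\sigma_{B_1}}{}$ (and likewise for the complements), and absorbing the remaining normalizations into the product measure $\currentLaw{A_1\cup B_1\otimes A_1^\comp\cup B_1^\comp}{G}$ produces exactly~\eqref{eq:UBstart}. The one point that needs genuine care is checking that the canonical cluster decomposition really enforces $\widehat{\current_1+\current_2}\in\EvPart{A_1\cup B_1}$, without which the backward application of the Switching Lemma would be invalid; the rest is a direct manipulation of the random-current partition functions.
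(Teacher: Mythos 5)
Your proof is correct and follows essentially the same route as the paper's: both boil down to bounding $\mathds{1}_{\EvPart{\nA}^\comp}$ (on $\partial\current_1 = \nA\cup\nB$, $\partial\current_2=\emptyset$) by a sum of indicators of events $\EvPart{A_1\cup B_1}\cap\{A_1\nleftrightarrow A_1^\comp, B_1\nleftrightarrow B_1^\comp\}$ over odd $A_1\subset\nA$, $B_1\subset\nB$, and then undoing the Switching Lemma termwise with $A=A_1\cup B_1$, $B=A_1^\comp\cup B_1^\comp$. The paper states this directly as a union bound, whereas you first set up an exact disjoint decomposition via the canonical component $\mathcal{C}^*$ and then relax it — a harmless refinement that gives the same inequality.
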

\begin{proof}
	Let us write \(\frD = \{A_1\nleftrightarrow A_1^\comp, B_1\nleftrightarrow  B_1^\comp\}\).
	First recall that, since \(A\cap\nB=\emptyset\) once \(n\) is large enough, it follows from~\eqref{eq:CovABRC} that
	\[
	\isingLaw{\sigma_\nA;\sigma_\nB}{}
	=
	\frac{\currentPF{\nA\cup \nB}{} \currentPF{\emptyset}{} \{\EvPart{A}^{\,\comp}\}}{\currentPF{\emptyset}{}^2}.
	\]
	To get an upper bound, simply notice that the event $\EvPart{A}^{\,\comp}$ and the constraint
	\(\partial\current_1 = \nA\cup\nB\) imply that one can find two sets $A_1\subset \nA, B_1\subset \nB$ of odd cardinality such that \(\EvPart{A_1\cup B_1}\), \(\EvPart{A_1^\comp\cup B_1^\comp}\), \(A_1\nleftrightarrow A_1^\comp\) and \(B_1\nleftrightarrow B_1^\comp\) are all realized in \(\widehat{\current_1+\current_2}\). Thus,
	\begin{align*}
	\frac{1}{\currentPF{\emptyset}{}^2}& \currentPF{\nA\cup \nB}{} \currentPF{\emptyset}{} \{\EvPart{A}^{\,\comp}\}\notag\\
	&\leq
	\frac{1}{\currentPF{\emptyset}{}^2} \sum_{\substack{A_1\subset \nA\\|A_1|\text{\rm\ odd}}} \sum_{\substack{B_1\subset \nB\\|B_1|\text{\rm\ odd}}}
	\currentPF{\nA\cup \nB}{} \currentPF{\emptyset}{} \{\EvPart{A_1\cup B_1} \cap \EvPart{A_1^\comp\cup B_1^\comp} \cap \frD\}\notag\\
	&=
	\frac{1}{\currentPF{\emptyset}{}^2} \sum_{\substack{A_1\subset \nA\\|A_1|\text{\rm\ odd}}} \sum_{\substack{B_1\subset \nB\\|B_1|\text{\rm\ odd}}}
	\currentPF{A_1\cup B_1}{} \currentPF{A_1^\comp\cup B_1^\comp}{} \{\frD\}\notag\\
	&=
	\sum_{\substack{A_1\subset \nA\\|A_1|\text{\rm\ odd}}} \sum_{\substack{B_1\subset \nB\\|B_1|\text{\rm\ odd}}}
	\isingLaw{\sigma_{A_1}\sigma_{B_1}}{G} \, \isingLaw{\sigma_{A_1^\comp}\sigma_{B_1^\comp}}{G} \,
	\currentLaw{A_1\cup B_1\otimes A_1^\comp\cup B_1^\comp}{G}(\frD),
	\end{align*}
	where the first equality is again obtained via the Switching Lemma.
	\end{proof}

In view of Remark~\ref{rem:couplingRCHT}, we can couple the double random-current measure with a double HT measure to obtain
	\begin{equation}\label{ineq:crucial1}
		\currentLaw{A_1\cup B_1\otimes A_1^\comp\cup B_1^\comp}{G}(A_1\nleftrightarrow A_1^\comp, B_1\nleftrightarrow B_1^\comp)
		\leq
		\evGraphLaw{A_1\cup B_1\otimes A_1^\comp\cup B_1^\comp}{G}(A_1\nleftrightarrow A_1^\comp, B_1\nleftrightarrow B_1^\comp),
	\end{equation}
where the absence of connexions in the last expression is with respect to the union of the two HT configurations.

Now, observe that the realization of \(\{A_1\nleftrightarrow A_1^\comp, B_1\nleftrightarrow B_1^\comp\}\) under \(\evGraphLaw{A_1\cup B_1\otimes A_1^\comp\cup B_1^\comp}{G}\) entails the existence of at least one path from \(A_1\) to \(B_1\) in the first copy of the process and at least one path from \(A_1^\comp\) to \(B_1^\comp\) in the second copy. We thus obtain, using Lemma~\ref{lem:HTPathMonotonicity},
\begin{align*}
	\evGraphLaw{A_1\cup B_1\otimes A_1^\comp\cup B_1^\comp}{G}(A_1\nleftrightarrow A_1^\comp, B_1\nleftrightarrow B_1^\comp)
	\leq
	\sum_{\substack{x\in A_1\\y\in A_1^\comp}}
	\sum_{\substack{u\in B_1\\v\in B_1^\comp}}
	\randPathLaw^{\{x,u\}\otimes \{y,v\}}_G (\gamma \cap \gamma' = \emptyset),
\end{align*}
where \(\gamma\) denotes the path connecting \(x\) to \(u\) in the first copy, \(\gamma'\) the path connecting \(y\) to \(v\) in the second copy, and the event \(\{\gamma\cap\gamma'=\emptyset\}\) means that these two paths have no vertex in common.

The expectations \(\isingLaw{\sigma_{A_1}\sigma_{B_1}}{G}\) and \(\isingLaw{\sigma_{A_1^\comp}\sigma_{B_1^\comp}}{G}\) can be estimated using Theorem~\ref{thm:OZ-odd-odd}.
The proof will therefore be complete once we show that there exists $\Cl{UBcst}$, depending on \(A,B,\beta,d\) but not on \(n\), such that
\begin{equation}\label{eq:ProbDisjointPaths}
	\randPathLaw^{\{x,u\}\otimes \{y,v\}}_G (\gamma \cap \gamma' = \emptyset)
	\leq
	\Cr{UBcst}\psi_d (n),
\end{equation}
for all \(n\) large enough, uniformly in $x\in A_1, y\in A_1^\comp, u\in B_1, v\in B_1^\comp$.
Note that \(\psi_d (n)\) corresponds to the behavior of the non-intersection probability for two independent directed random walks on \(\Zd\) conditioned to start at $(0,0)$ and end at $(n,0)$ (see Appendix~\ref{app:RW}).

The bound~\eqref{eq:ProbDisjointPaths} is clearly trivial when $d\geq 4$. For $d=2,3$, consider the directed $d$-dimensional walk $\diffSyncWalk$ introduced in Section~\ref{sec:RWrep}. We start with a lemma which is a straightforward consequence of the discussion in Appendix~\ref{app:RW}.
\begin{lemma}
	\label{lem:UBnoReturn}
	There exists $\Cl{UBnoRet}$ such that, for any \(z,w\in\Z^{d-1}\) and any \(m\geq 1\),
	\begin{align}
	\label{eq:UBqnxy}
		q_m(z,w)\leq \Cr{UBnoRet} (1+\normII{z})^{d+1} (1+\normII{w})^{d+1} \phi_d(m).
	\end{align}
	In particular, there exists $\Cl{UBcondnoRet}$ such that, whenever $\normII{z},\normII{w}\leq \sqrt{m}$,
	\begin{align}
		\diffSyncWalkLaw_{z} \bigl( \calR_m \given \calP_m(w) \bigr) \leq \Cr{UBcondnoRet} (1+\normII{z})^{d+1} (1+\normII{w})^{d+1} \psi_d(m).
	\end{align}
\end{lemma}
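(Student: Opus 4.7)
The plan is to prove the first inequality by reducing to the case $z = w = 0$, for which the bound $q_m(0, 0) \leq C \phi_d(m)$ is (per the statement) a consequence of the computations of Appendix~\ref{app:RW}. The second inequality then follows at once: the LCLT provides the lower bound $p_m(z, w) \geq c\, m^{-(d-1)/2}$ uniformly for $\|z\|, \|w\| \leq \sqrt{m}$, and since $\phi_d(m) = m^{-(d-1)/2}\psi_d(m)$ we conclude by dividing.

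To perform the reduction, I introduce a first-passage time into the unit tube around the axis $\{\perp = 0\}$, namely $\sigma = \inf\{k \geq 0 : \|\check{\walk}_k^\perp\| \leq 1\}$, together with a last-passage analog $\sigma'$ for the time-reversed walk (whose increments have the same joint law as those of the forward walk thanks to the symmetry $\check{X}_k^\perp \stackrel{d}{=} -\check{X}_k^\perp$ from Section~\ref{sec:DiffRW}). Decomposing the event $\calQ_m(w)$ at $(\sigma, \sigma')$ and using the strong Markov property, one gets for $\|z\|, \|w\| > 1$ a bound of the schematic form
\[
q_m(z, w) \leq \sum_{\substack{j, j' \geq 0 \\ \|y\|, \|y'\| \leq 1}} H(z; j, y) \; q_{m - j - j'}(y, y') \; H^{\ast}(w; j', y'),
\]
where $H(z; j, y)$ is the probability that the forward walk started at $(0, z)$ first hits the unit tube at $(j, y)$ without ever touching $\{\perp = 0\}$, and $H^{\ast}$ is its reverse-time analog.

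The three factors are then estimated separately. The middle factor is controlled by combining the Appendix~\ref{app:RW} asymptotic with the local limit theorem: uniformly for $\|y\|, \|y'\| \leq 1$ and $j + j' \leq m/2$, one has $q_{m - j - j'}(y, y') \leq C \phi_d(m)$. The complementary regime $j + j' > m/2$ contributes negligibly thanks to the exponential step tails of Property~\ref{it:expDecSteps} together with the directedness $\check{X}_k^\parallel \geq 0$ of Property~\ref{DRW-ConeProp}. For the lateral factors, the sum $\sum_{j, y} H(z; j, y)$ is bounded by the probability that the perpendicular walk $\check{\walk}^\perp$ (a $(d-1)$-dimensional random walk) started at $z$ ever visits a unit ball around the origin while avoiding $0$; a crude Green's function estimate for this killed walk yields the bound $C(1 + \|z\|)^{d+1}$, and similarly $\sum_{j', y'} H^{\ast}(w; j', y') \leq C(1 + \|w\|)^{d+1}$. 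The exponent $d+1$ is wasteful but sufficient for the present purpose.

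The main technical obstacle I expect is making the middle estimate uniform in $y, y'$: one needs $q_{m'}(y, y') \leq C\phi_d(m')$ not only at $(y, y') = (0, 0)$ but for all bounded $y, y'$, which relies on the LCLT together with the irreducibility and aperiodicity of $\check{\walk}^\perp$ established in Section~\ref{sec:DiffRW}. The cases $\|z\| \leq 1$ or $\|w\| \leq 1$ are treated analogously, suppressing the corresponding lateral factor.
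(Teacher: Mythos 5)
Your decomposition at the first and last entrances to the unit tube is a genuinely different route from the paper's, but as written it does not yield the first inequality. The estimate
\[
q_m(z,w)\leq\sum_{j,j',y,y'}H(z;j,y)\,q_{m-j-j'}(y,y')\,H^\ast(w;j',y')
\]
only accounts for trajectories that actually visit the unit tube. When $\normII{z},\normII{w}>1$, the complementary event---the walk reaches $(m,w)$ while staying at distance $>1$ from the axis throughout---is also part of $\calQ_m(w)$, and for $d\in\{2,3\}$ its probability is of the \emph{same order} as $\phi_d(m)$, not smaller. For $d=2$ this is a one-dimensional bridge conditioned to stay above level $2$ and end at a fixed height, which is of order $m^{-3/2}=\phi_2(m)$; for $d=3$ the perpendicular walk avoids a unit disc for order $m$ steps and ends at a fixed nearby point, which is of order $m^{-1}(\log m)^{-2}=\phi_3(m)$. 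So the term your inequality drops is not negligible, and handling it would require essentially the same analysis you are trying to reduce to. In addition, the ``main technical obstacle'' you flag---proving $q_{m'}(y,y')\leq C\phi_d(m')$ uniformly over bounded nonzero $y,y'$---is not a side issue that follows from the LCLT plus aperiodicity: it \emph{is} (a restriction of) the inequality being proved, so the reduction is circular unless you supply an independent argument for it.

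For contrast, the paper avoids any case distinction via a multiplication-and-concatenation device. Since $p_{\normII{z}^2}(0,z)\geq c(1+\normII{z})^{-(d-1)}$ by the local CLT, one has $1\leq c^{-1}(1+\normII{z})^{d-1}p_{\normII{z}^2}(0,z)$, so $q_m(z,w)$ is bounded by a constant times $(1+\normII{z})^{d-1}(1+\normII{w})^{d-1}\,p_{\normII{z}^2}(0,z)\,q_m(z,w)\,p_{\normII{w}^2}(0,w)$; one then expands $p_a(0,z)=\sum_{l\leq a} p_{a-l}(0,0)\,q_l(0,z)$ at the last visit to $0$ and notes that, by the Markov property and concatenation, $q_l(0,z)\,q_m(z,w)\,q_r(w,0)\leq f_{l+m+r}$, where $f_n=q_n(0,0)$ is the quantity already controlled in Appendix~\ref{app:RW}. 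This never presupposes a bound on $q$ for nonzero endpoints, and it does not split into trajectories that do or do not approach the axis. Your derivation of the second inequality---dividing by the LCLT lower bound on $p_m(z,w)$ for $\normII{z},\normII{w}\leq\sqrt{m}$---is correct and coincides with the paper's.
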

\begin{proof}
	First, notice that~\eqref{eq:LBp} entails the existence of $c>0$ such that, for any $r\in\Z^{d-1}$,
	\[
		p_{\normII{r}^2}(0,r) \geq c (1+\normII{r})^{-(d-1)}.
	\]
	Therefore, uniformly in \(w,z\in\Z^{d-1}\),
	\begin{align*}
		q_m(z,w) \leq  c\, (1+\normII{z})^{d-1} (1+\normII{w})^{d-1} p_{\normII{z}^2}(0,z)\, p_{\normII{w}^2}(0,w)\, q_m(z,w).
	\end{align*}
	Now, using the decomposition
	\[
		p_{m}(0,z) = \sum_{t=0}^{m} p_{m-t}(0,0) q_t(0,z),
	\]
	one obtains from Proposition~\ref{pro:AppRW} and~\eqref{eq:UBp} that
	\begin{align*}
		p_{\normII{z}^2}(0,z)\, p_{\normII{w}^2}(0,w)\,& q_m(z,w)\\
		&\leq
		\sum_{l=0}^{\normII{z}^2} \sum_{r=0}^{\normII{w}^2} p_{\normII{z}^2-l}(0,0)\, p_{\normII{w}^2-r}(0,0)\, \diffSyncWalkLaw_{0}(\tau_{0}^{\perp}= m+r+l)\\
		&\leq
		c \sum_{l=0}^{\normII{z}^2} \sum_{r=0}^{\normII{w}^2}
		\begin{cases}
			(m+r+l)^{-3/2}					& \text{when } d=2\\
			(m+r+l)^{-1}\log(m+r+l)^{-2}	& \text{when } d=3
		\end{cases}\\
		&\leq
		c\, (1+\normII{z})^2 (1+\normII{w})^2
		\begin{cases}
			m^{-3/2}				& \text{when } d=2\\
			m^{-1}\log(m)^{-2}		& \text{when } d=3.
		\end{cases}
	\end{align*}
	This gives equation~\eqref{eq:UBqnxy}. Since, by~\eqref{eq:LBp},
	\(
		p_m(z,w) \geq \Cl{CLTcstLB} m^{-(d-1)/2}
	\)
	when \(\normII{z},\normII{w}\leq \sqrt{m}\), the second claim also immediately follows:
	\begin{align*}
		\diffSyncWalkLaw_{z} (\calR_m \given \calP_m(w))
		&=
		\frac{q_m(z,w)}{p_m(z,w)} \\
		&\leq
		\Cr{UBcondnoRet} (1+\normII{z})^{d+1} (1+\normII{w})^{d+1}
		\begin{cases}
			m^{-1} 			& \text{when } d=2,\\
			\log(m)^{-2} 	& \text{when } d=3.
		\end{cases}
	\end{align*}
\end{proof}

We will now use the coupling of HT-paths with directed random walks. Let us write \(\zeta(\ell,L,z,z',w,w') = \Xi_{x}^{u}\times\Xi_{y}^{v}\bigl( \tilde{\walk}_0=(\ell,z+z'), \tilde{\walk}'_0=(\ell,z'), \tilde{\walk}_\#=(\ell+L,w+w'), \tilde{\walk}'_\#=(\ell+L,w') \bigr)\). Then,
\[
	\randPathLaw^{\{x,u\}\otimes \{y,v\}}_G (\gamma \cap \gamma' = \emptyset)
	\leq
	\sum_{\substack{\ell,L\in\Z_{\geq 0}\\z,z',w,w'\in\Z^{d-1}}} \!\!\!\!
	\zeta(\ell,L,z,z',w,w') \,
	\syncWalkLaw{z} \bigl( \calR_L \bgiven \calP_L(w) \bigr) + e^{-\Cr{expDecTVdist}n},
\]
since \(\gamma\cap\gamma'=\emptyset\) implies that \(\tilde{\walk}_k\neq\tilde{\walk}'_k\) for all \(0\leq k\leq\#\), whenever (the HT path version of) \eqref{eq:DoubleConeConfinement} applies. The properties of the measure \(\Xi\) guarantee that \(\Xi(\tilde{\walk}_\#^\parallel - \tilde{\walk}_0^\parallel < n/2) \leq e^{-cn}\) for some \(c>0\), once \(n\) is large enough. We can therefore assume that \(L\geq n/2\). In that case, Lemma~\ref{lem:UBnoReturn} implies that
\[
	\diffSyncWalkLaw_{z} \bigl( \calR_L \bgiven \calP_L(w) \bigr)
	\leq
	c\, (1+\normII{z})^{d+1} (1+\normII{w})^{d+1} \psi_d(n).
\]
The conclusion now follows from Property~\eqref{DRW-DecayProp} in Section~\ref{sec:DiffRW} and the exponential tails of \(\rho_{\scriptscriptstyle\rm L}\) and \(\rho_{\scriptscriptstyle\rm R}\).

%
%

\subsection{Proof of the lower bound}

In this section, we prove the lower bound of Theorem~\ref{thm:OZ-even-even}.
As for the upper bound, the first step will be to reduce the analysis to two independent walk-like objects conditioned on not intersecting.
This time, the random-cluster representation provides the adequate setting.
\begin{lemma}
	\label{lem:LBdecoupledRW}
	Assume that \(n\) is sufficiently large to ensure that \(A\cap\nB=\emptyset\).
	For any $x,y\in \nA$ and $u,v\in \nB$ with $x\neq y$ and $u\neq v$, the following bound holds:
	\begin{multline}
		\frac{\isingLaw{\sigma_\nA;\sigma_\nB}{G}}{\isingLaw{\sigma_x\sigma_u}{G}\isingLaw{\sigma_y\sigma_v}{G}} \geq
		\sum_{\substack{C_1\ni x,u\\C_2\ni y,v\\C_1\cap C_2=\emptyset}} \RCMLaw{\EvPart{\nA\cup \nB} \cap \EvPart{A}^{\,\comp} \given C_{x,u}=C_1, C_{y,v}=C_2}{G} \times\\
		\times \RCMLaw{C_{x,u}=C_1 \given x\leftrightarrow u}{G} \RCMLaw{C_{y,v}=C_2 \given y\leftrightarrow v}{G}.
		\label{eq:MainEstimateLB}
	\end{multline}
\end{lemma}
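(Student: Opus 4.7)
The plan is to reduce the truncated covariance to a random-cluster probability via \eqref{eq:FKsigmaA}, eliminate the subtracted product $\isingLaw{\sigma_\nA}{G}\isingLaw{\sigma_\nB}{G}$ using GKS, and then partition according to the clusters of $\{x,u\}$ and $\{y,v\}$. Since $A\cap\nB=\emptyset$, we have $\sigma_\nA\sigma_\nB=\sigma_{\nA\cup\nB}$ and hence
\[
\isingLaw{\sigma_\nA;\sigma_\nB}{G}=\RCMLaw{\EvPart{\nA\cup\nB}}{G}-\RCMLaw{\EvPart{A}}{G}\RCMLaw{\EvPart{\nB}}{G}.
\]
Decomposing $\EvPart{\nA\cup\nB}$ according to whether $\EvPart{A}$ occurs (with $\nA\cap\nB=\emptyset$ one has $\EvPart{\nA\cup\nB}\cap\EvPart{A}=\EvPart{A}\cap\EvPart{\nB}$) and using the second GKS inequality in the equivalent FK form $\RCMLaw{\EvPart{A}\cap\EvPart{\nB}}{G}\geq\RCMLaw{\EvPart{A}}{G}\RCMLaw{\EvPart{\nB}}{G}$, I get
\[
\isingLaw{\sigma_\nA;\sigma_\nB}{G}\geq \RCMLaw{\EvPart{\nA\cup\nB}\cap\EvPart{A}^{\,\comp}}{G}.
\]

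Next I would partition this last probability according to the identities $C_{x,u}=C_1$ and $C_{y,v}=C_2$, retaining only the terms with $C_1\cap C_2=\emptyset$ (an inequality, as it drops the terms where $C_{x,u}=C_{y,v}$). Using $\isingLaw{\sigma_x\sigma_u}{G}=\RCMLaw{x\leftrightarrow u}{G}$ (a special case of \eqref{eq:FKsigmaA}) together with $\RCMLaw{C_{x,u}=C_1\given x\leftrightarrow u}{G}\RCMLaw{x\leftrightarrow u}{G}=\RCMLaw{C_{x,u}=C_1}{G}$ (since $\{C_{x,u}=C_1\}\subset\{x\leftrightarrow u\}$), and similarly for $\{y,v\}$, the lemma reduces to the positive-association estimate
\begin{equation*}
\RCMLaw{C_{x,u}=C_1,\,C_{y,v}=C_2}{G}\;\geq\;\RCMLaw{C_{x,u}=C_1}{G}\,\RCMLaw{C_{y,v}=C_2}{G} \qquad (\ast)
\end{equation*}
for disjoint $C_1,C_2$.

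The main obstacle is $(\ast)$, because the cluster-identity events are neither increasing nor decreasing and FKG does not apply to them directly. I would reduce $(\ast)$ to a partition-function inequality. A direct calculation, exploiting that the configurations in $\{C_{x,u}=C_1\}$ have $\partial C_1$ closed and $C_1$ internally connected so the factor $2^{\kappa}$ decouples between $C_1$ and its complement, gives
\[
\RCMLaw{C_{x,u}=C_1}{G}=\frac{2\,W_{C_1}\,\FKPF{G[V_G\setminus C_1]}}{\FKPF{G}},
\]
where $W_{C_1}=\sum_{\omega\subset E(C_1):\,\omega\text{ connects }C_1}\prod_{e\in\omega}(e^{2\beta J_e}-1)$, with analogous expressions for $\RCMLaw{C_{y,v}=C_2}{G}$ and for the joint event (in which a factor $4\,W_{C_1} W_{C_2}\,\FKPF{G[V_G\setminus(C_1\cup C_2)]}$ appears). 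Cancelling the $W_{C_i}$ factors, $(\ast)$ is equivalent to
\[
\FKPF{G[V_G\setminus(C_1\cup C_2)]}\,\FKPF{G}\;\geq\;\FKPF{G[V_G\setminus C_1]}\,\FKPF{G[V_G\setminus C_2]}.
\]

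To establish this last inequality I would apply the FKG inequality to the two decreasing events $\mathcal{A}=\{\text{every edge in }E(C_1)\cup\partial C_1\text{ is closed}\}$ and $\mathcal{B}=\{\text{every edge in }E(C_2)\cup\partial C_2\text{ is closed}\}$. Using that on $\mathcal{A}$ every vertex in $C_1$ is isolated (and similarly for $\mathcal{B}$), one computes $\RCMLaw{\mathcal{A}}{G}=2^{|C_1|}\FKPF{G[V_G\setminus C_1]}/\FKPF{G}$, $\RCMLaw{\mathcal{B}}{G}=2^{|C_2|}\FKPF{G[V_G\setminus C_2]}/\FKPF{G}$, and (since on $\mathcal{A}\cap\mathcal{B}$ the only free edges are those with both endpoints in $V_G\setminus(C_1\cup C_2)$) $\RCMLaw{\mathcal{A}\cap\mathcal{B}}{G}=2^{|C_1|+|C_2|}\FKPF{G[V_G\setminus(C_1\cup C_2)]}/\FKPF{G}$. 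The FKG inequality $\RCMLaw{\mathcal{A}\cap\mathcal{B}}{G}\geq\RCMLaw{\mathcal{A}}{G}\RCMLaw{\mathcal{B}}{G}$ then yields the desired partition-function estimate after cancelling the powers of $2$, completing the proof.
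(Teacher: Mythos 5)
Your proof is correct, and the first part (dropping the subtracted product via FKG on the increasing events $\EvPart{A}$, $\EvPart{\nB}$, then partitioning the surviving term over disjoint cluster realizations and dividing by $\isingLaw{\sigma_x\sigma_u}{G}\isingLaw{\sigma_y\sigma_v}{G}=\RCMLaw{x\leftrightarrow u}{G}\RCMLaw{y\leftrightarrow v}{G}$) coincides with the paper's argument. The only genuine difference is in the key positive-association step $(\ast)$. The paper argues it via conditioning and the domain Markov property: it writes $\RCMLaw{C_{x,u}=C_1,\,C_{y,v}=C_2}{G}=\RCMLaw{\openSet{C_1}\given\closeSet{\partial C_1}}{G}\,\RCMLaw{\closeSet{\partial C_1}\given\closeSet{\partial C_2}}{G}\,\RCMLaw{C_{y,v}=C_2}{G}$ and then applies FKG to the two decreasing events $\closeSet{\partial C_1}$ and $\closeSet{\partial C_2}$ to remove the conditioning in the middle factor. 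You instead compute the cluster probabilities explicitly in terms of restricted partition functions, cancel the internal weights $W_{C_i}$, and reduce $(\ast)$ to $\FKPF{G[V_G\setminus(C_1\cup C_2)]}\,\FKPF{G}\geq\FKPF{G[V_G\setminus C_1]}\,\FKPF{G[V_G\setminus C_2]}$, which you then derive from FKG applied to the decreasing ``isolate $C_i$'' events $\{E(C_i)\cup\partial C_i\text{ closed}\}$. Both routes are valid and short; yours is a bit more explicit and self-contained (the factorizations are verified by hand rather than invoked), while the paper's avoids introducing the intermediate weights $W_{C_i}$ and gets there purely by conditional manipulations. Either version could serve as the paper's proof.
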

\begin{proof}
	Given a set $E\subset E_G$ of edges, we will consider below the two FK events
	\(
		\openSet{E}=\{E \open\} = \{\omega_e=1\ \forall e\in E \}
	\)
	and
	\(
		\closeSet{E}=\{E \close\}= \{\omega_e=0\ \forall e\in E \}.
	\)

	We start with the FK representation of $\isingLaw{\sigma_\nA;\sigma_\nB}{G}$.
	Let $x,y\in\nA$, $x\neq y$, and $u,v\in \nB$, $u\neq v$.
	Using~\eqref{eq:FKsigmaA}, we can write
	\begin{align}
		\isingLaw{\sigma_\nA;\sigma_\nB}{G}
		&=
		\RCMLaw{\EvPart{\nA\cup \nB}}{G} - \RCMLaw{\EvPart{\nA}}{G} \RCMLaw{\EvPart{\nB}}{G}\notag\\
		&=
		\RCMLaw{\EvPart{\nA\cup \nB} \cap \EvPart{A}^{\,\comp}}{G} + \RCMLaw{\EvPart{\nA\cup \nB} \cap \EvPart{\nA}}{G} - \RCMLaw{\EvPart{\nA}}{G} \RCMLaw{\EvPart{\nB}}{G}\notag\\
		&=
		\RCMLaw{\EvPart{\nA\cup \nB} \cap \EvPart{A}^{\,\comp}}{G} + \RCMLaw{\EvPart{\nA} \cap \EvPart{\nB}}{G} - \RCMLaw{\EvPart{\nA}}{G} \RCMLaw{\EvPart{\nB}}{G}\notag\\
		&
		\geq \RCMLaw{\EvPart{\nA\cup \nB} \cap \EvPart{A}^{\,\comp}}{G}
		\label{ineq:crucial2}
		\\
		&\geq \RCMLaw{\EvPart{\nA\cup \nB} \cap \EvPart{A}^{\,\comp}, x\leftrightarrow u, y\leftrightarrow v, x\nleftrightarrow y}{G},\notag
	\end{align}
	where the first inequality follows from the FKG inequality, observing that $\EvPart{A}$ and $\EvPart{\nB}$ are increasing events.

	Next, we partition the event in the last expression according to the realizations of the two induced clusters, that is, we sum over all pairs of clusters \(C_1,C_2\) such that \(x,u\in C_1\), \(y,v\in C_2\) and \(C_1\cap C_2=\emptyset\).
	\begin{multline*}
	\RCMLaw{\EvPart{\nA\cup \nB} \cap \EvPart{A}^{\,\comp}, x\leftrightarrow u, y\leftrightarrow v, x\nleftrightarrow y}{G}\\
	=
	\sum_{\substack{C_1,C_2\\C_1\cap C_2=\emptyset}}  \RCMLaw{\EvPart{\nA\cup \nB} \cap \EvPart{A}^{\,\comp} \given C_{x,u}=C_1, C_{y,v}=C_2}{G} \RCMLaw{C_{x,u}=C_1, C_{y,v}=C_2}{G}.
	\end{multline*}
	Writing, for \(E\subset E_G\), \(\partial E = \setof{e=\{i,j\}\in E_G}{i\in E, j\notin E}\), we obtain, using again the FKG inequality,
	\begin{align*}
	\RCMLaw{C_{x,u}=C_1, C_{y,v}=C_2}{G}\hspace{-2.5cm}&\\
	&=
	\RCMLaw{\openSet{C_1}, \closeSet{\partial C_1}, \openSet{C_2},\closeSet{\partial C_2}}{G}\\
	&=
	\RCMLaw{\openSet{C_1}, \closeSet{\partial C_1} \given  \closeSet{\partial C_2}}{G}\RCMLaw{C_{y,v}=C_2}{G}\\
	&=
	\RCMLaw{\openSet{C_1} \given  \closeSet{\partial C_1}}{G} \RCMLaw{ \closeSet{\partial C_1} \given \closeSet{\partial C_2}}{G} \RCMLaw{C_{y,v}=C_2}{G}\\
	&\geq
	\RCMLaw{C_{x,u}=C_1}{G} \RCMLaw{C_{y,v}=C_2}{G}\\
	&=
	\RCMLaw{x\leftrightarrow u}{G} \RCMLaw{y\leftrightarrow v}{G} \RCMLaw{C_{x,u}=C_1 \given x\leftrightarrow u}{G}\\
	&\hspace{5cm}\times
	\RCMLaw{C_{y,v}=C_2 \given y\leftrightarrow v}{G}.\qedhere
	\end{align*}
\end{proof}
Again, the expectations \(\isingLaw{\sigma_x\sigma_u}{G}\) and \(\isingLaw{\sigma_y\sigma_v}{G}\) can be estimated by Theorem~\ref{thm:OZ-odd-odd}, so that we only have to control the right-hand side of~\eqref{eq:MainEstimateLB}.

In what follows, we suppose \(n\) large enough for the definitions to make sense. Define \(K=\max_{x\in A\cup B}\normI{x}\). One has \(A\subset\Z_{\leq K}\times \Z^{d-1}\) and \(\nB\subset\Z_{\geq (n\uvec)_1-K}\times \Z^{d-1} \). Let \(a_1=(10K,2K,0,\dots,0)\), \(a_2=(10K,-2K,0,\dots,0)\) and \(b_1=n\uvec-(10K,-2K,0,\dots,0)\), \(b_2=n\uvec-(10K,2K,0,\dots,0)\).

We then use finite energy to prove the following:
\begin{lemma}
	\label{lem:LB_finiteEnergy}
	One can construct \(H\subset\{C\ni x,u\}\times\{C\ni y,v\}\) satisfying:
	\begin{itemize}
		\item there exists \(c>0\) not depending on \(n\) such that
		\[
		\min_{(C_1,C_2)\in H} \RCMLaw{\EvPart{\nA\cup \nB} \cap \EvPart{A}^{\,\comp} \given C_{x,u}=C_1, C_{y,v}=C_2}{G}\geq c.
		\]
		\item There exist \(c,C>0\) not depending on \(n\), such that
		\begin{multline*}
		\sum_{\substack{(C_1,C_2)\in H\\ C_1\cap C_2=\varnothing}}\RCMLaw{C_{x,u}=C_1 \given x\leftrightarrow u}{G} \RCMLaw{C_{y,v}=C_2 \given y\leftrightarrow v}{G}\geq\\
		\geq -e^{-cn} + C\diffSyncWalkLaw_{a_1-a_2} \bigl(\norm{\diffSyncWalk^{\perp}_k} > 2\delta\check{X}_{k+1}^{\parallel}, \; \forall k<\tau^{\parallel}_{L} \bgiven \calP_L(b_1-b_2) \bigr),
		\end{multline*}
		where \(L=(n\uvec)_1-20K\).
	\end{itemize}
\end{lemma}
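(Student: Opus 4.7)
The plan is to use finite energy of the random-cluster measure to prescribe the local behaviour of both clusters inside two small boxes of size $O(K)$ (one around the origin capturing $\nA$, and one around $n\uvec$ capturing $\nB$), and then to invoke the Ornstein--Zernike coupling of Section~\ref{sec:RWrep} on the ``middle'' portions of the clusters, between $a_1,b_1$ for $C_1$ and between $a_2,b_2$ for $C_2$. Concretely I would take $H$ to be the set of pairs $(C_1,C_2)$ satisfying $C_1\cap\nA=\{x\}$, $C_1\cap\nB=\{u\}$, $a_1,b_1\in C_1$, $C_2\cap\nA=\{y\}$, $C_2\cap\nB=\{v\}$, $a_2,b_2\in C_2$, together with a deterministic prescription of every edge incident to $\nA\cup\{a_1,a_2\}$ and every edge incident to $\nB\cup\{b_1,b_2\}$ (some forced open, others closed) chosen so that inside the two boxes $C_1$ and $C_2$ are manifestly disjoint.

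For the first bullet, observe that on $H$ one has $|C_1\cap\nA|=|C_2\cap\nA|=1$, so $\EvPart{A}^{\,\comp}$ is automatic. For $\EvPart{\nA\cup\nB}$ it remains to partition the still-unspecified vertices of $\nA\setminus\{x,y\}$ and of $\nB\setminus\{u,v\}$ into clusters each intersecting $\nA\cup\nB$ in an even number of points. Since $|A|$ and $|B|$ are even, it suffices to pair the remaining vertices of $\nA\setminus\{x,y\}$ among themselves and those of $\nB\setminus\{u,v\}$ among themselves, each pair in its own tiny cluster; this is a local event inside a region of $n$-independent size $O(K)$, so by finite energy of the FK measure its conditional probability is bounded below by a positive constant uniformly over $(C_{x,u},C_{y,v})\in H$.

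For the second bullet, for each $(C_1,C_2)\in H$ decompose $C_i$ into the prescribed ``boundary pieces'' inside the two boxes and a ``middle'' cluster connecting $a_i$ to $b_i$ in the complement. By the product-form of the FK weight under the prescription and by Property~\ref{it:finEnergBC}, the two boundary pieces contribute a multiplicative constant bounded below, while the law of the middle piece is close, in total variation, to $\RCMLaw{\,\cdot\,\given a_i\leftrightarrow b_i}{G}$. Applying Property~\ref{it:expDecTVdist} to each factor replaces these, up to an additive error $O(e^{-cn})$ responsible for the $-e^{-cn}$ term in the claim, by independent directed-walk clusters of laws $\Xi_{a_1}^{b_1}$ and $\Xi_{a_2}^{b_2}$. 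Synchronizing along the first coordinate as in Section~\ref{sec:RWrep}, the diamond-confinement~\eqref{eq:DoubleConeConfinement} then confines the two middle clusters to $D(\tilde\walk)$ and $D(\tilde\walk')$ respectively.

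It remains to reduce the non-intersection $C_1\cap C_2=\emptyset$ to the advertised walk event. A short convex-geometry argument on the cones $\fcone,\bcone$ shows that $\diam(\tilde\walk_{k-1},\tilde\walk_k)$ and $\diam(\tilde\walk'_{k-1},\tilde\walk'_k)$, which share the same first-coordinate range by synchronization, are disjoint whenever
\[
\norm{\tilde\walk^\perp_{k-1}-(\tilde\walk'_{k-1})^\perp}>2\delta\bigl(\tilde\walk^\parallel_k-\tilde\walk^\parallel_{k-1}\bigr),
\]
which, passing to the difference walk, becomes exactly $\norm{\diffSyncWalk^\perp_k}>2\delta\check{X}_{k+1}^\parallel$ for every $k<\tau^\parallel_L$; conditioning both walks to end at $b_1,b_2$ corresponds, under $\diffSyncWalkLaw_{a_1-a_2}$, to conditioning on $\calP_L(b_1-b_2)$, producing the lower bound claimed. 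The main obstacle is the clean decoupling of boundary and middle: one must verify that the FK weight genuinely factorises (after conditioning and up to FKG-controlled inclusions) into a boundary contribution bounded below by a positive constant and a middle contribution on which the OZ coupling applies directly, and that the $O(K)$-deep buffer between the two regions is wide enough for Property~\ref{it:expDecTVdist} to be invoked; the subsequent geometric conversion from cluster to diamond non-intersection is then routine.
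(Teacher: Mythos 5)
Your proposal captures the overall structure of the paper's argument and the choice of the set $H$ is morally the same, but there is a genuine gap in your treatment of the second bullet, which you yourself flag as ``the main obstacle'': the paper does not overcome the obstacle you describe, it sidesteps it. You propose to factorize the FK weight into a ``boundary'' contribution and a ``middle'' contribution whose law would be close to $\RCMLaw{\,\cdot\,\given a_i\leftrightarrow b_i}{G}$, and then to run the OZ coupling on the middle. That factorization is delicate (because of the $2^{\kappa}$ factor and because the domain Markov property would only give an FK measure on a subgraph with boundary conditions, not the full-volume conditioned measure), and it is not what the paper does. Instead, the paper applies the OZ coupling $\Xi_x^u$ of Property~\ref{it:expDecTVdist} to the \emph{entire} cluster $C_{x,u}$ under $\RCMLaw{\,\cdot\,\given x\leftrightarrow u}{G}$, so that the boundary pieces are already the $B_L$, $B_R$ components of the OZ decomposition; Property~\ref{it:finEnergBC} (plus finite energy of FK to show that $\RCMLaw{C_{x,u}\cap\bcone_{a_1}=\gamma_{xa_1}\given x\leftrightarrow u}{G}$ is bounded below) then gives $\Xi_x^u(B_L=\gamma_{xa_1},B_R=\gamma_{ub_1})\geq c$, and the remaining walk portion between $a_1$ and $b_1$ is the ``middle'' automatically. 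No factorization of the FK weight is required.

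Two further, smaller points. First, the definition of $H$ in the paper also imposes the diamond-confinement $C_1\setminus(\gamma_{xa_1}\cup\gamma_{ub_1})\subset\diam(a_1,b_1)$ (and similarly for $C_2$), which is needed so that the events defining $H$ are genuinely compatible with the output of the OZ coupling and so that the confinement in~\eqref{eq:DoubleConeConfinement} applies; your version of $H$ omits this, relying implicitly on the coupling to deliver it, which is circular as written. Second, for the first bullet the paper connects all of $A\setminus\{x,y\}$ (resp.\ $\nB\setminus\{u,v\}$) into a single cluster $C_{A\setminus\{x,y\}}$ rather than pairing them; both work, but your pairing requires an extra check that those paired vertices can be isolated into small clusters disjoint from $C_1$, $C_2$, and each other inside a bounded box, which is essentially the same construction. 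Aside from these points, your reduction of the non-intersection $C_1\cap C_2=\emptyset$ to the difference-walk event (via synchronization, diamond confinement, and the $2\delta$ criterion) matches the paper's argument.
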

\begin{proof}
	We start by constructing \(H\).
	\begin{figure}[h]
		\centering
		\includegraphics[scale=0.8]{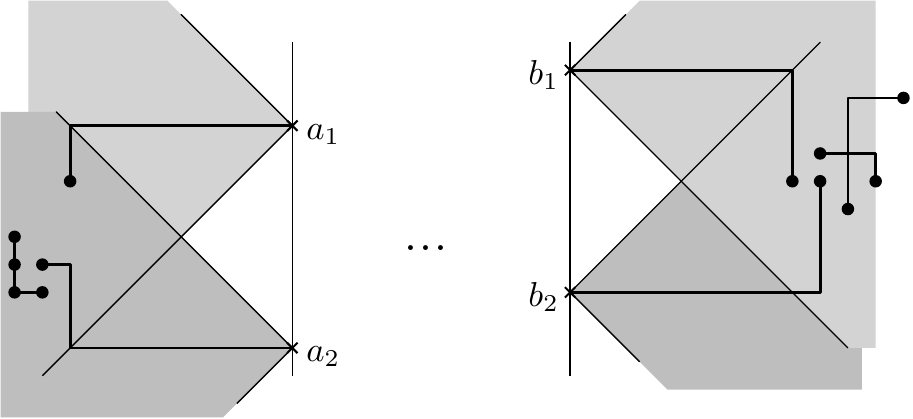}
		\caption{An example of admissible boundary conditions.
		}
		\label{fig:FinEnergAB}
	\end{figure}

	There exist \(x,y\in A\) (see Figure~\ref{fig:FinEnergAB}) such that one can find
	\begin{itemize}
		\item a self-avoiding path \(\gamma_{xa_1}\) connecting \(x\) to \(a_1\) included in \(\bcone_{a_1}\),
		\item a self-avoiding path \(\gamma_{ya_2}\) connecting \(y\) to \(a_2\) included in \(\bcone_{a_2}\),
		\item a cluster \(C_{A\setminus\{x,y\}}\) connecting all sites of \(A\setminus\{x,y\}\) together included in \([-K,K]^d\),
	\end{itemize}
	with \(\gamma_{xa_1},\gamma_{ya_2},C_{A\setminus\{x,y\}}\) all disjoints, in particular, the edge boundary of any of those three items does not intersect the other items.
	Fix a compatible 5-tuple \(x,y,\gamma_{xa_1},\gamma_{ya_2},C_{A\setminus\{x,y\}}\). In the same fashion, there exist \(u,v\in \nB\) (see Figure~\ref{fig:FinEnergAB}) such that one can find
	\begin{itemize}
		\item a self-avoiding path \(\gamma_{ub_1}\) connecting \(u\) to \(b_1\) included in \(\fcone_{b_1}\),
		\item a self-avoiding path \(\gamma_{vb_2}\) connecting \(v\) to \(b_2\) included in \(\fcone_{b_2}\),
		\item a cluster \(C_{\nB\setminus\{u,v\}}\) connecting all sites of \(\nB\setminus\{u,v\}\) together, included in \(n\uvec+[-K,K]^d\),
	\end{itemize} with \(\gamma_{ub_1},\gamma_{vb_2},C_{\nB\setminus\{u,v\}}\) all disjoints. Fix a compatible 5-tuple \(u,v,\gamma_{ub_1},\gamma_{vb_2},C_{\nB\setminus\{u,v\}}\).

	We then define \( H\subset\{C\ni x,u\}\times\{C\ni y,v\} \) the set of pairs of cluster \((C_1,C_2)\) such that
	\begin{itemize}
		\item \(C_1\cap \bcone_{a_1}=\gamma_{xa_1},\ C_1\cap \fcone_{b_1}=\gamma_{ub_1},\ C_1\setminus(\gamma_{xa_1}\cup \gamma_{ub_1})\subset\diam(a_1,b_1) \),
		\item \(C_2\cap \bcone_{a_2}=\gamma_{ya_2},\ C_2\cap \fcone_{b_2}=\gamma_{vb_2},\ C_2\setminus(\gamma_{ya_2}\cup \gamma_{vb_2})\subset\diam(a_2,b_2) \).
	\end{itemize}

	Notice that, for any \((C_1,C_2)\in H\), opening all edges of \(C_{\nB\setminus\{u,v\}}\cup C_{A\setminus\{x,y\}}\) and closing the edge-boundary of this set has probability bounded from below under \(\RCMLaw{\cdot \given C_{x,u}=C_1, C_{y,v}=C_2}{G}\) uniformly in \(n\) as \(C_{\nB\setminus\{u,v\}}\cup C_{A\setminus\{x,y\}}\) does not intersect \(C_1\cup\partial C_1\cup C_2\cup\partial C_2\) and the size of the support of this event is bounded uniformly in \(n\). The first part of the lemma thus follows from the fact that this event is a sub-event of \(\EvPart{\nA\cup \nB} \cap \EvPart{A}^{\,\comp}\) under \(C_{x,u}=C_1, C_{y,v}=C_2\).

	The second part uses the replacement of \(\RCMLaw{\cdot \given x\leftrightarrow u}{G}\) by \(\Xi_x^u\) explained in Section~\ref{sec:RWrep} (the same replacement is done for \(\RCMLaw{\cdot \given y\leftrightarrow v}{G}\) by \(\Xi_y^v\)). By~\ref{it:finEnergBC} and finite energy (opening the edges of \(\gamma_{xa_1}\) and closing the edges of \(\partial \gamma_{xa_1}\) has a probability bounded from below under \(\RCMLaw{\cdot \given x\leftrightarrow u}{G}\)), one has: there exists \(c>0\) such that
	\[
		\Xi_x^u(B_L=\gamma_{xa_1},B_R=\gamma_{ub_1}) \geq c.
	\]
	So,
	\begin{align*}
		\sum_{\substack{(C_1,C_2)\in H\\ C_1\cap C_2=\varnothing}}& \RCMLaw{C_{x,u}=C_1 \given x\leftrightarrow u}{G} \RCMLaw{C_{y,v}=C_2 \given y\leftrightarrow v}{G} \geq\\
		&\geq
		-e^{-cn} + \Xi_x^u(B_L=\gamma_{xa_1},B_R=\gamma_{ub_1})\,\Xi_y^v(B_L=\gamma_{ya_2},B_R=\gamma_{vb_2})\\
		&\hspace{5.5cm}\times\ \walkLaw{a_1\otimes a_2}^{b_1\otimes b_2}\bigl(\diam(\walk)\cap\diam(\walk')=\varnothing\bigr)\\
		&\geq-e^{-cn} +C\diffSyncWalkLaw_{a_1-a_2} \bigl(\norm{\diffSyncWalk^{\perp}_k} > 2\delta\check{X}_{k+1}^{\parallel}, \; \forall k<\tau^{\parallel}_{L} \bgiven \calP_L(b_1-b_2) \bigr) ,
	\end{align*}
	where \(L= (n\uvec)_1-20K\). The first inequality is the total variation estimate~\ref{it:expDecTVdist}, inclusion of events and Remark~\ref{rem:coneConfinement}, the second is inclusion of events (recall that \(\diffSyncWalkLaw_{a_1-a_2}\) is the law of the difference walk induced by the two synchronized walks obtained from \(\Xi_x^u\) and \(\Xi_y^v\); by construction, the two walks have synchronized starting time).
\end{proof}

The lower bound in Theorem~\ref{thm:OZ-even-even} will follow by plugging the results of Lemma~\ref{lem:LB_finiteEnergy} in \eqref{eq:MainEstimateLB} and direct use of the following lemma.
\begin{lemma}
	\label{lem:noDiamIntersecLB}
	For any $C>0$, there exists $\Cl{LBcst}>0$ such that, for any $z,w\in\Z^{d-1}$ with $\normII{z},\normII{w}\leq C$ and all \(m\) large enough, one has
	\begin{equation}
		\diffSyncWalkLaw_{z}\bigl(\norm{\diffSyncWalk_i^{\perp}} \geq 2\delta\check{X}_{i+1}^{\parallel}\; \forall i<\tau^{\parallel}_{m} \bgiven \calP_m(w) \bigr)
		\geq
		\Cr{LBcst} \psi_d(m).
	\end{equation}
\end{lemma}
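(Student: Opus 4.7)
The strategy is to bound the numerator of the conditional probability from below by a product of three contributions corresponding to an escape phase, a bulk phase, and a return phase. Since by the local CLT estimate~\eqref{eq:LBp} one has $p_m(z,w)\asymp m^{-(d-1)/2}$ for $\normII{z},\normII{w}\leq C$, it suffices to show
\[
\diffSyncWalkLaw_z\bigl(\{\norm{\diffSyncWalk_i^\perp}\geq 2\delta\check{X}_{i+1}^\parallel\ \forall i<\tau_m^\parallel\}\cap\calP_m(w)\bigr)\geq c\,\phi_d(m).
\]

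For the escape phase, I would use the finite-energy property~\ref{it:finEnergEone}, according to which steps of the form $(2,z)$ with $\normI{z}=1$ have positive probability under $p$. Fix a large constant $\kappa_0$. With probability $c_1(\kappa_0)>0$, I force the first $\kappa_0$ steps of $\diffSyncWalk$ to be a specific concatenation of such steps, driving the walk from $(0,z)$ to $(2\kappa_0,z^*)$ with $\normII{z^*}=\kappa_0$ and monotonically increasing $\normII{\diffSyncWalk^\perp}$ during the segment. Since along this segment $\check{X}^\parallel\leq 2$, the cone constraint $\norm{\diffSyncWalk_i^\perp}\geq 2\delta\check{X}_{i+1}^\parallel$ is satisfied after the first step for $\kappa_0$ large enough. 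A symmetric argument (time-reversal, or finite energy applied near the endpoint) provides a return phase bringing the walk from $(m-2\kappa_0,w^*)$ with $\normII{w^*}=\kappa_0$ back to $(m,w)$ with probability $c_2(\kappa_0)>0$.

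For the bulk phase, the strong Markov property reduces matters to lower-bounding
\[
\diffSyncWalkLaw_{z^*}\bigl(\{\norm{\diffSyncWalk_i^\perp}\geq 2\delta\check{X}_{i+1}^\parallel\ \forall i\}\cap\calP_{m'}(w^*)\bigr),\qquad m'=m-4\kappa_0,
\]
by a constant multiple of $q_{m'}(z^*,w^*)\asymp\phi_d(m)$, whose sharp asymptotics are supplied by Appendix~\ref{app:RW}. I would proceed by a union bound on the complementary event $\{\exists i:\,\check{X}_{i+1}^\parallel>\normII{\diffSyncWalk_i^\perp}/(2\delta)\}\cap\calQ_{m'}(w^*)$. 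The exponential tails of $\check{X}^\parallel$ from Section~\ref{sec:DiffRW} yield a conditional bad-step probability $\leq e^{-c\normII{\diffSyncWalk_i^\perp}}$, and bridge-excursion-type estimates for $\diffSyncWalk^\perp$ under $\calQ_{m'}(w^*)$ give $\normII{\diffSyncWalk_i^\perp}\gtrsim\sqrt{\min(i,m'-i)}$ with good probability, so the total bad contribution is bounded by $\sum_{i=\kappa_0}^{m'-\kappa_0}e^{-c\sqrt{\min(i,m'-i)}}\leq 2\int_{\kappa_0}^{\infty}e^{-c\sqrt{x}}\,dx$, which is $\leq 1/2$ once $\kappa_0$ is taken large enough. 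The desired event therefore captures at least half of $q_{m'}(z^*,w^*)$. Combining the three phases and dividing by $p_m(z,w)$ delivers the claim.

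The main obstacle is the bulk step: the adaptive threshold $2\delta\check{X}_{i+1}^\parallel$ couples each step to its next increment, whereas the standard bridge estimates in Appendix~\ref{app:RW} handle only the simpler ``no return to zero'' event $\calQ$. Absorbing this coupling relies on the combination of exponential step tails with a lower-tail control on $\normII{\diffSyncWalk^\perp}$ under the survival-bridge law. The case $d\geq 4$ is essentially immediate since $\diffSyncWalk^\perp$ is transient; the cases $d=2,3$ rely on the tightness of the bridge-excursion description of the conditioned walk supplied by the appendix.
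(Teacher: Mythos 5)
Your overall architecture is close to the paper's: force the ends at a fixed $\kappa_0$-cost (resp.\ $T$-cost in the paper), then show the cone event holds with constant conditional probability in the bulk given no return to zero, and use Appendix~\ref{app:RW} to quantify the no-return event. The paper organizes this slightly differently---it first proves a two-sided bound on $\diffSyncWalkLaw_{z}(\calQ_m(w)\given\calP_m(w))$ (Claim~\ref{cl:LB1}) and then shows $\diffSyncWalkLaw_z(\bigcap_i\calD_i^\comp\given\calQ_m(w))\geq c$, splitting off $[0,m]\setminus[T,m-T]$ by surgery rather than a strong-Markov escape phase---but that is cosmetic.

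The genuine gap is in your bulk-phase union bound. You claim that bridge-excursion estimates give $\normII{\diffSyncWalk_i^\perp}\gtrsim\sqrt{\min(i,m'-i)}$ ``with good probability,'' and then sum $e^{-c\sqrt{\min(i,m'-i)}}$. But that sum only controls the contribution from trajectories \emph{on} the good event; the event $\{\normII{\diffSyncWalk_i^\perp}<\epsilon\sqrt{\min(i,m'-i)}\}$ has a probability that does not decay in $i$, so its contribution to $\sum_i \diffSyncWalkLaw(\calD_i\given\calQ)$ is not controlled by your estimate, and naively costs an extra factor of order $m$. What you actually need is a bound on $\diffSyncWalkExp_{z}[e^{-c\normII{\diffSyncWalk_i^\perp}}\given\calQ_m(w)]$ that is \emph{summable in $i$}, and the quantitative input for this is not the (unproved, and not in Appendix~\ref{app:RW}) pointwise lower bound on $\diffSyncWalk_i^\perp$, but the \emph{upper bound} $q_i(z,u)\leq c(1+\normII{u})^{d+1}\phi_d(i)$ of Lemma~\ref{lem:UBnoReturn}: decomposing the bridge at times $i$ and $i+1$ gives $\diffSyncWalkLaw_z(\calQ_i(u),\check X_{t_i+1}=(\ell,v-u)\given\calQ_m(w))=q_i(z,u)\,\diffSyncWalkLaw(\check X_{t_i+1}=(\ell,v-u))\,q_{m-i-\ell}(v,w)/q_m(z,w)$, and summing over $u,v,\ell$ against the exponential step tail yields $\diffSyncWalkLaw_z(\calD_i\given\calQ_m(w))\lesssim\phi_d(i)\phi_d(m-i)/\phi_d(m)$. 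For $i\leq m/2$ this is $\lesssim\phi_d(i)$, which is summable (crucially, for $d=3$ it is $i^{-1}(\log i)^{-2}$, not $i^{-1}$), and $\sum_{i\geq T}\phi_d(i)\leq c\Phi_d(T)$ can be made $<1/4$ by taking $T$ large. Without this spatial decomposition (or an equivalently quantitative bridge estimate), the bulk step does not close.
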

\begin{proof}
	The proof is done in two steps. In the first one, we prove that
	\begin{claim}
		\label{cl:LB1}
		For any \(z,w\in\Z^{d-1}\), there exists $c>0$ such that, for all \(m\) large enough,
		\begin{equation}
			\label{eq:noReturnLB}
			\diffSyncWalkLaw_{z}(\calQ_m(w) \given \calP_m(w))
			\geq
			c \,\psi_d(m).
		\end{equation}
	\end{claim}
	This is slight extension of the estimates of Proposition~\ref{pro:AppRW} to different starting and ending points than $0$.

	The second step will consist in showing that the walk conditioned not to hit zero will typically stay away from zero, so that Lemma~\ref{lem:noDiamIntersecLB} actually follows from Claim~\ref{cl:LB1}.

	\medskip
	We first show Claim~\ref{cl:LB1}.
	Let us fix, \(\ell_z,\ell_w\) such that	\(q_{\ell_z}(z,0) > 0\) and \(q_{\ell_w}(0,w) > 0\). Now, observe that
	\[
		q_m(z,w) \geq q_{\ell_z}(z,0) q_{m-\ell_z-\ell_w}(0,0) q_{\ell_w}(0,w).
	\]
	Indeed, under \(\diffSyncWalkLaw_{z}\), any trajectory contributing to \(\calP_m(w)\), visiting \((\ell_z,0)\) and \((m-\ell_w,0)\), but otherwise avoiding \(0\), can be transformed into a trajectory contributing to \(\calQ_m(w)\) by interchanging the two increments incident to \((\ell_z,0)\) and doing the same to those incident to \((m-\ell_w,0)\), without changing the probability of the trajectory.
	It thus follows from the lower bounds on \(f_m\) in Appendix~\ref{app:RW} (see~\eqref{eq:LBf3d}, \eqref{eq:LBf4d} and~\eqref{eq:LBf2d}) that there exists \(c(z,w)>0\) such that
	\begin{equation}\label{eq:LBqxy}
	q_{m}(z,w) \geq c(z,w)\, \phi(m),\qquad\text{for all \(m\) large enough.}
	\end{equation}
	\eqref{eq:noReturnLB} then follows from~\eqref{eq:UBp}.

	\medskip
	Let us now turn to the second step and show how Claim~\ref{cl:LB1} implies Lemma~\ref{lem:noDiamIntersecLB}.
	We introduce
	\[
		\calD_i
		=
		\begin{cases}
			\{\norm{\diffSyncWalk_{t_i}^{\perp}} < 2\delta\check{X}_{t_i+1}^{\parallel}\} 	& \text{if } i\in\Hrange,\\
			\emptyset 																		& \text{otherwise.}
		\end{cases}
	\]
	Note that, for \(\diam(\syncWalk_{t_i},\syncWalk_{t_{i+1}})\cap\diam(\syncWalk'_{t_i},\syncWalk'_{t_{i+1}})\neq\emptyset\) to occur, it is necessary that \(\calD_i\) occurs.
	To conclude, one thus has to show that there exists $c>0$, depending on \(z\) and \(w\), such that,  for all \(m\) large enough,
	\begin{align}
		\label{eq:not0toAway0}
		\diffSyncWalkLaw_{z} \bigl( \calD_{i}^\comp\; \forall i\in\Hrange\cap[0,m] \bgiven \calQ_{m}(w) \bigr) \geq c.
	\end{align}
	To this end, we separate the treatment of the trajectory close to the starting and ending points from the treatment away from them. Let $T>0$ be a large integer (which will be chosen later as a function of $z,w$) and write $I_T=[T,m-T]$. Then,
	\begin{align*}
		\diffSyncWalkLaw_{z} \bigl( \calD_{i}^\comp\; \forall i\in&\Hrange\cap[0,m] \bgiven \calQ_{m}(w) \bigr)\\
		&=
		\diffSyncWalkLaw_{z}\Bigl(\bigcap_{i\in\Hrange\cap I_T} \calD_{i}^\comp \Bgiven \calQ_{m}(w) \Bigr)
		\diffSyncWalkLaw_{z}\Bigl(\bigcap_{i\in\Hrange\cap[0,m]} \calD_{i}^\comp \Bgiven \calQ_{m}(w), \bigcap_{i\in\Hrange\cap I_T} \calD_{i}^\comp \Bigr)\\
		&\geq
		e^{-\Cl{step2FE}T}\, \diffSyncWalkLaw_{z}\Bigl(\bigcap_{i\in\Hrange\cap I_T} \calD_{i}^\comp \Bgiven \calQ_{m}(w) \Bigr),
	\end{align*}
	where the inequality is obtained by fixing a trajectory on $[0,m]\setminus I_T$ for every realization of $\bigcap_{i\in\Hrange\cap I_T} \calD_{i}^\comp$. That $\Cr{step2FE}\geq 0$ can be chosen uniformly over those realizations follows from the fact that the walk's displacement is constrained by the cone property (see Property~\ref{DRW-ConeProp} in Section~\ref{sec:DiffRW}).
	We control the remaining probability via a union bound:
	\begin{align*}
		\diffSyncWalkLaw_{z}\Bigl(\bigcup_{i\in\Hrange\cap I_T} \calD_{i} &\Bgiven \calQ_{m}(w) \Bigr)
		\leq
		\diffSyncWalkExp_{z}\Bigl[\sum_{i=T+1}^{m-T}\I_{\calD_i} \Bgiven \calQ_{m}(w) \Bigr]\\
		&=
		\sum_{i=T+1}^{m-T} \sum_{\ell=1}^{m-i} \diffSyncWalkLaw_{z} \bigl( i\in\Hrange, \norm{\diffSyncWalk_{t_i}^{\perp}} < 2\delta\ell, \check{X}_{t_i+1}^{\parallel}=\ell \bgiven \calQ_{m}(w) \bigr)\\
		&=
		\sum_{i=T+1}^{m-T} \sum_{\ell=1}^{m-i} \sum_{\norm{u}\leq 2\delta\ell} \sum_{\norm{v}\leq 4\delta\ell}
		\diffSyncWalkLaw_{z} \bigl( \calQ_i(u), \check{X}_{t_i+1}=(\ell,v-u) \bgiven \calQ_{m}(w) \bigr).
	\end{align*}
	We now will bound the sum over $i\leq m/2$, the other half is done in the same fashion.
	Let us write
	\[
		\Phi_d(T) =
		\begin{cases}
			T^{-1/2} 		& \text{if } d=2,\\
			(\log T)^{-1}	& \text{if } d=3,\\
			T^{-(d-3)/2}	& \text{if } d\geq 4.
		\end{cases}
	\]
	Using the fact that the increments \(\check{X}_k\) have exponential tails, Lemma~\ref{lem:UBnoReturn} and~\eqref{eq:LBqxy}, we obtain (with the convention that \(\phi_d(0)=1\)):
	\begin{align*}
		\sum_{i=T+1}^{m/2} &\sum_{\ell=1}^{m-i} \sum_{\norm{u}\leq 2\delta\ell} \sum_{\norm{v}\leq 4\delta\ell}
		\diffSyncWalkLaw_{z}(\calQ_i(u), \check{X}_{t_i+1}=(\ell,v-u) \given \calQ_{m}(w))\\
		&=
		\sum_{i=T+1}^{m/2} \sum_{\ell=1}^{m-i} \sum_{\norm{u}\leq 2\delta\ell} \sum_{\norm{v}\leq 4\delta\ell}
		\frac{q_i(z,u)\, \diffSyncWalkLaw(\check{X}_{t_{i+1}}=(\ell,v-u))\, q_{m-i-\ell}(v,w)}{q_m(z,w)}\\
		&\leq
		c \sum_{i=T}^{m/2} \sum_{\ell=1}^{m-i} \sum_{\norm{u}\leq 2\delta\ell} \sum_{\norm{v}\leq 4\delta\ell}
		(1+\norm{u})^{d+1} (1+\norm{v})^{d+1} \frac{\phi_d(i)e^{-c\ell}\phi_d(m-i-\ell)}{\phi_d(m)}\\
		&\leq
		c \sum_{i=T}^{m/2} \phi_d(i) \sum_{\ell=1}^{m-i}
		\ell^{4d} e^{-c\ell} \frac{\phi_d(m-i-\ell)}{\phi_d(m)}\\
		&\leq
		c \sum_{i=T}^{\infty} \phi_d(i)
		\leq
		c\, \Phi_d(T)
		\leq
		\frac{1}{4},
	\end{align*}
	provided $T$ be chosen large enough as a function of $z,w$ (since \(c=c(z,w)\)).

	Repeating this argument for the other half of the sum yields the same bound; we thus have (with the choice of $T$ as mentioned before)
	\begin{align*}
		\diffSyncWalkLaw_{z}\Bigl(\bigcap_{i\in\Hrange\cap[0,m]} \calD_{i}^\comp \Bgiven \calQ_{m}(w) \Bigr)
		\geq
		\tfrac{1}{2} e^{-\Cr{step2FE}T},
	\end{align*}
	which concludes the proof.
\end{proof}

%
%

\appendix

\section{Random walk estimates}\label{app:RW}

In this section, we provide some random walk estimates that are needed in the paper. Since we are already losing multiplicative constants when reducing the analysis to non-intersecting random walks, we do not try to get sharp asymptotics, but prefer to provide instead a short self-contained analysis. It should however be noted that the approach in~\cite{Jain+Pruitt-1972} and~\cite{Uchiyama-2011} can be adapted to our present setting and would yield sharp asymptotics (and even information on higher-order corrections). Set
\[
	\psi_d(n) =
	\begin{cases}
		n^{-1} 			& \text{when }d=2,\\
		\log(1+n)^{-2} 	& \text{when }d=3.
	\end{cases}
\]
We use the notations introduced in Section~\ref{sec:NotationsRW}.
\begin{proposition}\label{pro:AppRW}
Let \((\diffSyncWalk_n)_{n\geq 0}\) denote the random walk introduced in Section~\ref{sec:RWrep}.
Then, when \(d\in\{2,3\}\), there exist \(0< c_-\leq c_+ < \infty\) such that, for all \(n\geq 1\),
\[
c_- \psi_d(n) \leq \diffSyncWalkLaw_{0}\bigl( \calQ_n(0) \bgiven \calP_n(0) \bigr) \leq c_+ \psi_d(n) .
\]
Moreover, for any \(d\geq 4\), there exists \(c_-(d)>0\) such that, for all \(n\geq 1\),
\[
\diffSyncWalkLaw_{0}\bigl( \calQ_n(0) \bgiven \calP_n(0) \bigr) \geq c_-(d).
\]
\end{proposition}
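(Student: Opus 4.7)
The strategy combines a multidimensional local CLT for $\check{\walk}$ with classical first-return estimates for its perpendicular component $\check{\walk}^\perp$, which (by the properties in Section~\ref{sec:DiffRW}) is an aperiodic, irreducible random walk on $\Z^{d-1}$ with exponentially decaying increments. By the symmetry $(\check{X}^\parallel,\check{X}^\perp)\stackrel{\mathrm{law}}{=}(\check{X}^\parallel,-\check{X}^\perp)$ the parallel and perpendicular increments are uncorrelated, and the cone property gives $\mu:=\e[\check{X}_1^\parallel]>0$. A standard LCLT (valid under the exponential moment hypothesis) then yields
\[
p_n(0,0)=\sum_k \p(\check{\walk}_k=(n,0))=\Theta(n^{-(d-1)/2}),
\]
since the sum is concentrated on $k\in[n/\mu-K\sqrt n,n/\mu+K\sqrt n]$ with each term of order $k^{-d/2}$. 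For the numerator, the strong Markov property applied at $\tau_0^\perp$ gives $q_n(0,0)=G(n)$, where $G(y):=\p(\tau_0^\perp<\infty,\ \check{\walk}_{\tau_0^\perp}^\parallel=y)$, together with the renewal identity $p_n(0,0)=\sum_{i\geq 1}G^{\star i}(n)$. So the proposition reduces to controlling $G(n)/p_n(0,0)$.

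I would then proceed by cases. For $d\geq 4$, the walk $\check{\walk}^\perp$ on $\Z^{d-1}$ with $d-1\geq 3$ is transient, so $G$ is a sub-probability of mass $1-r\in(0,1)$. Applying a joint LCLT at the stopping time $\tau_0^\perp$ (conditionally on $\tau_0^\perp<\infty$, the parallel coordinate concentrates around $\mu\,\e[\tau_0^\perp\mid\tau_0^\perp<\infty]$ with Gaussian fluctuations) yields $G(n)\sim c\,n^{-(d-1)/2}$; since the convolutions $G^{\star i}(n)$ decay like $n^{-(d-1)/2}$ with summable constants in $i$, $p_n$ and $G$ share the same rate and the ratio is bounded below. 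For $d\in\{2,3\}$ the perpendicular walk is recurrent, and the key input is the classical first-return asymptotics
\[
\p(\tau_0^\perp=k)\sim
\begin{cases}
c_2\,k^{-3/2} & (d=2),\\
c_3\,k^{-1}(\log k)^{-2} & (d=3),
\end{cases}
\]
for an irreducible aperiodic walk on $\Z^{d-1}$ with exponential moments (Wiener--Hopf / 1D fluctuation theory in $d=2$, classical 2D recurrence estimates in $d=3$). Combined with a local CLT for $\check{\walk}_k^\parallel$ at time $k\approx n/\mu$, summing over $k$ in a $\sqrt n$-window produces $G(n)\sim c\,n^{-(d-1)/2}\psi_d(n)$, and the matching lower and upper bounds on $q_n/p_n$ follow.

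The main obstacle is justifying the factorisation $\p(\tau_0^\perp=k,\,\check{\walk}_k^\parallel=n)\asymp \p(\tau_0^\perp=k)\,\p(\check{\walk}_k^\parallel=n)$ in the recurrent cases, since parallel and perpendicular components are only uncorrelated and not independent: they are coupled through the cone constraint $\check{X}_k^\parallel\geq(\delta/2)\|\check{X}_k^\perp\|$. I would handle this via a Fourier/Wiener--Hopf analysis of the joint characteristic function of $(\tau_0^\perp,\check{\walk}_{\tau_0^\perp}^\parallel)$, extracting the leading behaviour from the singularity structure of the first-return generating function near the origin, or, alternatively, by a coupling argument that replaces $\check{\walk}^\parallel$ by an auxiliary walk with independent parallel and perpendicular components and controls the discrepancy using the exponential moments and the reflection symmetry in $\check{X}^\perp$.
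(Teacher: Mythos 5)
Your overall reduction---writing $p_n(0,0)=\sum_{i\ge 1}G^{\star i}(n)$ with $G(n)=\diffSyncWalkLaw_0(\tau_0^\perp<\infty,\ \diffSyncWalk_{\tau_0^\perp}^\parallel=n)$, then comparing $G(n)$ to $p_n(0,0)$---is the right structural frame, and $G(n)$ is exactly the paper's $f_n=q_n(0,0)$. But the route you take to estimate $G(n)$ has a genuine, unresolved gap at precisely the point you flag. Parametrizing by the step number $k$ and then invoking the one-dimensional first-return tail $\diffSyncWalkLaw(\tau_0^\perp=k)$ forces you to control the joint law of $(\tau_0^\perp,\diffSyncWalk_{\tau_0^\perp}^\parallel)$, and the factorization $\diffSyncWalkLaw(\tau_0^\perp=k,\,\diffSyncWalk_k^\parallel=n)\asymp\diffSyncWalkLaw(\tau_0^\perp=k)\,\diffSyncWalkLaw(\diffSyncWalk_k^\parallel=n)$ is not merely a technical nuisance: conditioning on an excursion of the perpendicular walk of length exactly $k$ does bias the parallel increments via the cone constraint $\check X_j^\parallel\ge\tfrac\delta2\norm{\check X_j^\perp}$, so the conditional mean and fluctuations of $\diffSyncWalk_k^\parallel$ are not those of the unconditioned walk. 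You acknowledge this, but only sketch two possible remedies (a Wiener--Hopf computation of a joint transform, or a coupling) without carrying either out; as written, the proposal does not prove the proposition.

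The paper's proof avoids the joint local CLT entirely by never indexing by step number. Since $\diffSyncWalk^\parallel$ is strictly increasing, the successive parallel heights $T_0=0<T_1<T_2<\cdots$ at which $\diffSyncWalk^\perp=0$ form a one-dimensional renewal process, with $u_n$ its renewal mass function and $f_n$ its increment law. The exact renewal identity $\sum_{m=0}^n u_m\,r_{n-m}=1$, combined with the local CLT bound $u_n\asymp n^{-(d-1)/2}$ and monotonicity of $(r_m)$, yields matching upper and lower bounds on the tail $r_n$ (of order $n^{-1/2}$ for $d=2$ and $(\log n)^{-1}$ for $d=3$) by purely elementary manipulations. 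Bounds on $f_n$ then follow from a strong-Markov decomposition at the first/last crossings of the parallel levels $n/4$ and $3n/4$ (upper bound) and of levels $M$ and $n-M$ with $M=n(\log n)^{-3}$ or $M$ constant (lower bounds), using the bounds on $r$ and on $p$; the $d=2$ lower bound is obtained separately by a cyclic rearrangement of increments. None of these steps requires knowing the step-number marginal of $\tau_0^\perp$ or any joint CLT. If you wish to salvage your approach, you would need to actually establish the joint estimate you postulate---and the cleanest way to do so is, in effect, to switch to the parallel-height parametrization, which brings you back to the paper's argument.
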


\medskip
The remainder of this appendix is devoted to a proof of Proposition~\ref{pro:AppRW}. The arguments used below are heavily inspired by those in~\cite{Jain+Pruitt-1972,Dvoretzky+Erdos-1951}.
To shorten notations, we set, for \(n\in\bbZ_{\geq 0}\),
\[
u_n = p_n(0,0),\qquad
f_n = q_n(0,0).
\]
Note that the quantity we want to control can then be expressed as \(f_n/u_n\).

\subsection*{Upper bound on \(p_n(x,y)\)}
It follows from the local limit theorem (see~\cite{Ioffe-2015}) that there exists \(C_p^+\) such that, for all \(x,y\in\bbZ^{d-1}\), all \(n\geq 1\) and all \(d\geq 2\),
\begin{equation}\label{eq:UBp}
p_n(x,y) \leq C_p^+ n^{-(d-1)/2}.
\end{equation}

\subsection*{Lower bound on \(p_n(x,y)\)}
It follows from the local theorem (see~\cite{Ioffe-2015}) that, for any \(c>0\), there exist \(C_p^->0\) such that, for all \(x,y\in\bbZ^{d-1}\) satisfying \(\norm{x-y}\leq c\sqrt{n}\), all \(n\geq 1\) and all \(d\geq 2\),
\begin{equation}\label{eq:LBp}
p_n(x,y) \geq C_p^- n^{-(d-1)/2}.
\end{equation}

\subsection*{Upper bound on \(r_n\)}
Since
\(
\sum_{m=0}^n u_m r_{n-m} = 1
\)
and the sequence \((r_m)_{m\geq 1}\) is decreasing, we have
\[
1 \geq r_n \sum_{m=0}^n u_m.
\]
It then follows from~\eqref{eq:LBp} (and \(u_0=1\)) that there exists \(C_r^+\) such that, for all \(n\geq 1\),
\begin{equation}\label{eq:UBr}
r_n \leq C_r^+
\begin{cases}
n^{-1/2}		&	\text{when }d=2,\\
(1+\log n)^{-1}	&	\text{when }d=3.
\end{cases}
\end{equation}

\subsection*{Lower bound on \(r_n\)}
We start with the case \(d=3\).
Let \(C>1\) be a large constant (to be chosen later). Since
\(
\sum_{m=0}^{Cn} u_m r_{Cn-m} = 1
\)
and the sequence \((r_m)_{m\geq 1}\) is decreasing (and bounded by \(1\)), we have, using~\eqref{eq:UBp},
\[
r_n \sum_{m=0}^{Cn-n} u_m
\geq
1 - \sum_{m=Cn-n+1}^{Cn} u_m
\geq
1 - C_p^+ \sum_{m=Cn-n+1}^{Cn} m^{-1}
\geq
\frac 12,
\]
once \(C\) is chosen large enough. Since, again by~\eqref{eq:UBp},
\(
\sum_{m=0}^{Cn-n} u_m \leq C_p^+\log(Cn)
\),
we conclude that there exists \(C_r^->0\) such that, for all \(n\geq 1\),
\begin{equation}\label{eq:LBr}
r_n \geq C_r^- (1+\log n)^{-1}.
\end{equation}
Let us now turn to the case \(d=2\). Proceeding similarly as above, we write
\[
r_n \sum_{m=0}^{Cn-n} u_m
\geq
1 - \sum_{m=Cn-n+1}^{Cn} u_m r_{Cn-m}
\geq
1 - C_p^+ ((C-1)n)^{-1/2} \sum_{m=0}^{n-1} r_m.
\]
Using~\eqref{eq:UBr} (and \(r_0=1\)), we see that the last term can again be made smaller than \(1/2\) by choosing \(C\) large enough. Of course,
\(
\sum_{m=0}^{Cn-n} u_m
\leq
C_p^+ (Cn)^{1/2}
\)
by~\eqref{eq:UBp}. It thus follows that one can find \(C_r^->0\) such that, for all \(n\geq 1\),
\begin{equation}\label{eq:LBr2d}
r_n\geq C_r^- n^{-1/2}.
\end{equation}

\subsection*{Upper bound on \(f_n\)}
Set \(I=\{\frac n4, \dots, \frac n3\}\).
Let
\(
\vect\tau = \inf\setof{k\geq 0}{S_k^\parallel \geq n/4}
\)
and
\(
\tcev{\tau} = \sup\setof{k\geq 0}{S_k^\parallel \leq 3n/4}
\).
Now, observe that, since the increments of \(\diffSyncWalk\) have exponential tails, there exists \(c>0\) such that
\begin{multline*}
f_n
\leq
e^{-cn} +
\sum_{m,m'\in I}
\sum_{u,u'\neq 0}
\diffSyncWalkLaw_{0}
\bigl(
	S_{\vect\tau}^\perp = u,  S_{\vect\tau}^\parallel = m,
	S_{\tcev\tau}^\perp = u', S_{\tcev\tau}^\parallel = n-m',
	S_{\tau_0^\perp}^\parallel = n
\bigr) .
\end{multline*}
Applying twice the strong Markov property (once for the walk itself, once for the time-reversed walk), we can write
\begin{multline*}
\diffSyncWalkLaw_{0}
\bigl(
	S_{\vect\tau}^\perp = u,  S_{\vect\tau}^\parallel = m,
	S_{\tcev\tau}^\perp = u', S_{\tcev\tau}^\parallel = n-m',
	S_{\tau_0^\perp}^\parallel = n
\bigr)
=\\
\diffSyncWalkLaw_{0}
\bigl(
	S_{\vect\tau}^\perp = u,  S_{\vect\tau}^\parallel = m, \tau_0^\perp > \vect\tau
\bigr)
\diffSyncWalkLaw_{0}
\bigl(
	S_{\vect\tau}^\perp = u',  S_{\vect\tau}^\parallel = m', \tau_0^\perp > \vect\tau
\bigr)\\
\times q_{n-m'-m}(u,u').
\end{multline*}
Now, on the one hand, \eqref{eq:UBp} implies that
\[
q_{n-m'-m}(u,u')
\leq
p_{n-m'-m}(u,u')
\leq
C_p^+ (n/3)^{-(d-1)/2}.
\]
On the other hand, by~\eqref{eq:UBr},
\begin{align*}
\sum_{m\in I}\sum_{u\neq 0}
\diffSyncWalkLaw_{0}
\bigl(
	S_{\vect\tau}^\perp = u,  S_{\vect\tau}^\parallel = m, \tau_0^\perp > \vect\tau
\bigr)
&\leq
r_{n/4}
&\leq
C_r^+
\begin{cases}
(n/4)^{-1/2}					& \text{if } d=2,\\
\bigl(\log(1+n/4)\bigr)^{-1}	& \text{if } d=3.
\end{cases}
\end{align*}
Of course, the same applies to the sum over \(m',u'\). Overall, we conclude that there exists \(C_f^+\) such that, for all \(n\geq 1\),
\begin{equation}\label{eq:UBf}
f_n \leq C_f^+
\begin{cases}
n^{-3/2}							&	\text{when }d=2,\\
n^{-1}\bigl(\log (1+n)\bigr)^{-2}	&	\text{when }d=3.
\end{cases}
\end{equation}

\subsection*{Lower bound on \(f_n\)}

\begin{figure}[t]
\centering
\includegraphics[width=.8\textwidth]{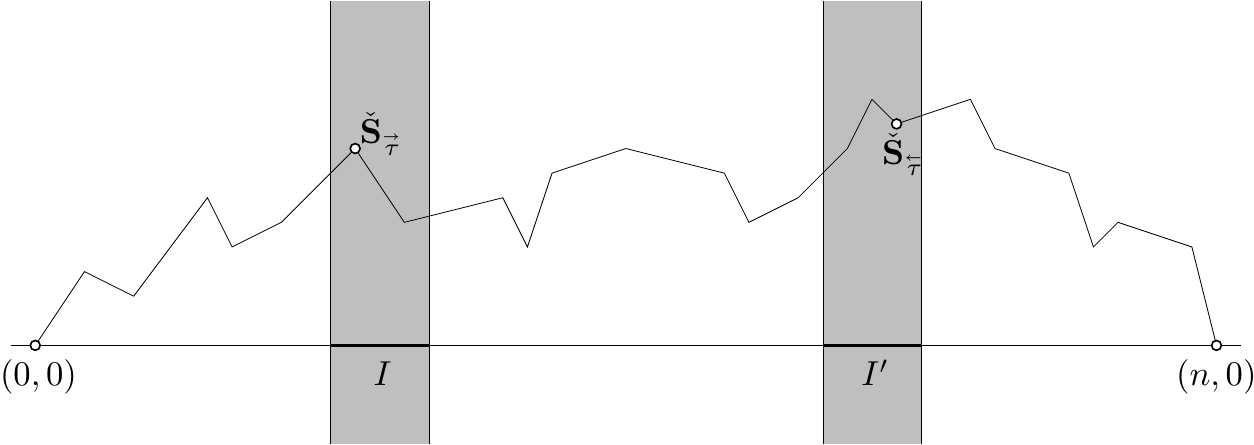}
\caption{The type of decomposition used in the derivations of the upper and lower bounds on \(f_n\). Various choices are made for the intervals \(I\) and \(I'\).}
\end{figure}

We start with the case \(d=3\).
Our goal is to prove that there exists \(C_f^- > 0\) such that, for all \(n\geq 1\),
\begin{equation}\label{eq:LBf3d}
f_n \geq C_f^- n^{-1} \bigl( \log (1+n) \bigr)^{-2}.
\end{equation}
First, let us set \(M=[n(\log n)^{-3}]\) and \(I=\{M, \dots, 2M\}\).
Similarly as we did for the upper bound, let us introduce
\(
\vect\tau = \inf\setof{k\geq 0}{S_k^\parallel \geq M}
\)
and
\(
\tcev{\tau} = \sup\setof{k\geq 0}{S_k^\parallel \leq n-M}
\).
We can then write
\begin{align*}
f_n
&\geq
\sum_{m,m'\in I}
\sum_{u,u'\neq 0}
\diffSyncWalkLaw_{0}
\bigl(
	S_{\vect\tau}^\perp = u,  S_{\vect\tau}^\parallel = m,
	S_{\tcev\tau}^\perp = u', S_{\tcev\tau}^\parallel = n-m',
	S_{\tau_0^\perp}^\parallel = n
\bigr) \\
&=
\sum_{m,m'\in I}
\sum_{u,u'\neq 0}
\vect{q}_m(u) \, q_{n-m'-m}(u,u') \, \tcev{q}_{m'}(u') .
\end{align*}
where we have introduced
\(
\vect{q}_m(u)
=
\diffSyncWalkLaw_{0} \bigl(
	S_{\vect\tau}^\perp = u,  S_{\vect\tau}^\parallel = m, \tau_0^\perp > \vect\tau
\bigr)
\) and
\(
\tcev{q}_{m'}(u')
=
\diffSyncWalkLaw_{0}
\bigl(
	S_{\vect\tau}^\perp = u',  S_{\vect\tau}^\parallel = m', \tau_0^\perp > \vect\tau
\bigr)
\).
The next observation is that
\begin{align*}
0\leq p_\ell(u,u') - q_\ell(u,u')
&=
\diffSyncWalkLaw_{u}(\tau_0^\perp \leq \ell, \calP_\ell(u'))\\
&\leq
\sum_{r=1}^{\ell/2} q_r(u,0) p_{\ell-r}(0,u') + \sum_{r=\ell/2}^{\ell-1} p_r(u,0) q_{\ell-r}(0,u').
\end{align*}
Consequently, writing \(N=N(n,m,m')=n-m-m'\),
\begin{align*}
\Bigl| \sum_{m,m'\in I}
\sum_{u,u'\neq 0}
&\vect{q}_m(u) \, \bigl(q_{N}(u,u')-p_{N}(u,u')\bigr) \, \tcev{q}_{m'}(u') \Bigr|\\
&\leq
\sum_{m,m'\in I}
\sum_{u,u'\neq 0}
\sum_{r=1}^{N/2}
\vect{q}_m(0,u) q_r(u,0) p_{N-r}(0,v) \tcev{q}_{m'}(u',0)\\
&\qquad+
\sum_{m,m'\in I}
\sum_{u,u'\neq 0}
\sum_{r=N/2}^{N-1}
\vect{q}_m(0,u) p_r(u,0) q_{N-r}(0,v) \tcev{q}_{m'}(u',0).
\end{align*}
By symmetry, it suffices to bound the first sum. First, by~\eqref{eq:UBp} and the fact that \(r\leq N/2\),
\begin{gather*}
p_{N-r}(0,v) \leq 2C_p^+ N^{-1} \leq 2 C_p^+ (n-4M)^{-1} \leq C n^{-1}.
\end{gather*}
Second, by~\eqref{eq:UBf},
\[
\sum_{m\in I} \sum_{u\neq 0} \sum_{r=1}^{N/2} \vect{q}_m(0,u) q_r(u,0)
\leq
\sum_{r=1}^{n} f_{M+r}
\leq
\frac{C_f^+}{(\log M)^2} \sum_{r=1}^n (M+r)^{-1}
\leq
C \frac{\log\log n}{(\log n)^2}.
\]
Finally, by~\eqref{eq:UBr}
\begin{equation}\label{eq:sumum}
\sum_{m'\in I} \sum_{u'\neq 0} \tcev{q}_{m'}(u',0)
\leq
r_M
\leq
C_r^+ (\log M)^{-1}
\leq
C (\log n)^{-1}.
\end{equation}
We conclude that
\[
\Bigl| \sum_{m,m'\in I} \sum_{u,u'\neq 0}
\vect{q}_m(u) \, \bigl(q_{N}(u,u')-p_{N}(u,u')\bigr) \, \tcev{q}_{m'}(u') \Bigr|\\
\leq
C \frac{\log\log n}{n(\log n)^3},
\]
which is negligible in view of our target estimate.

Now, by the local CLT~\cite{Ioffe-2015}, there exists \(c\) such that
\[
\bigl| p_\ell(u,u') - \frac{c}{\ell} \bigr| \leq \frac{\epsilon}{\ell} + O(\ell^{-2}\norm{u'-u}^2),
\]
uniformly in \(u,u'\) for all \(\ell\) large enough. Using \(\norm{u'-u}^2 \leq 2\norm{u}^2 + 2\norm{u'}^2\), \eqref{eq:sumum} and
\[
\sum_{m\in I} \sum_{u\neq 0} \vect{q}_{m}(u,0) \norm{u}^2
\leq
\diffSyncWalkExp_0(\norm{\diffSyncWalk^\perp_{\tau_M^\parallel}}^2)
\leq
C M ,
\]
we deduce that
\[
n^{-2} \sum_{m,m'\in I} \sum_{u,u'\neq 0} \vect{q}_m(0,u) \norm{u'-u}^2 \tcev{q}_{m'}(u',0)
\leq
C n^{-1}(\log n)^{-3},
\]
which is also negligible. \eqref{eq:LBf3d} thus follows from
\[
\sum_{m,m'\in I} \sum_{u,v\neq 0} \vect{q}_m(0,u) N^{-1} \tcev{q}_{m'}(u',0) \geq n^{-1} (1-e^{-c M})^2 (r_M)^2 \geq \tfrac12 (C_r^-)^2 n^{-1}(\log n)^{-2},
\]
where we used~\eqref{eq:LBr} and the exponential tails of the random walk increments.

\medskip
The same argument applies when \(d\geq 4\). Indeed, proceeding as before, but with \(M\) being now a large constant independent of \(n\), we get
\[
\Bigl| \sum_{m,m'\in I} \sum_{u,u'\neq 0}
\vect{q}_m(u) \, \bigl(q_{N}(u,u')-p_{N}(u,u')\bigr) \, \tcev{q}_{m'}(u') \Bigr|\\
\leq
C n^{-(d-1)/2} \sum_{r=M+1}^\infty f_r.
\]
Fix \(\epsilon>0\). Since \(\sum_{r\geq 1} f_r < 1\), the above abound is smaller than \(\epsilon n^{-(d-1)/2}\), provided \(M>M_0(\epsilon)\).
Then, again by the local CLT,
\[
\bigl| p_\ell(u,u') - c\ell^{-(d-1)/2} \bigr|
\leq
\frac{\epsilon}{\ell^{(d-1)/2}} + O(\ell^{-(d+1)/2}\norm{u'-u}^2),
\]
uniformly in \(u,u'\) for all \(\ell\) large enough. Proceeding as above, the contribution of the second term is seen to be of order \(n^{-(d+1)/2}\) and thus negligible. Therefore, since
\[
\sum_{m,m'\in I} \sum_{u,u'\neq 0} \vect{q}_m(0,u) N^{-(d-1)/2} \tcev{q}_{m'}(u',0) \geq C n^{-(d-1)/2}
\]
for some constant \(C>0\), we conclude that there exists \(C_f^->0\) such that
\begin{equation}\label{eq:LBf4d}
f_n \geq C_f^- n^{-(d-1)/2}.
\end{equation}

\medskip
Let us finally turn to the case \(d=2\), for which we need to proceed differently.
Fix \(x,y > 0\) such that \(\diffSyncWalkLaw_0(\diffSyncWalk_1=(x,y))=c>0\). Clearly,
\begin{align*}
f_n
&\geq
\sum_{k=2}^n \diffSyncWalkLaw_0( \diffSyncWalk_k=(n,0), \diffSyncWalk_1=(x,y), \diffSyncWalk_{k-1}=(n-x,y), \diffSyncWalk_j^\perp \geq y\; \forall 2\leq j\leq k-2)\\
&\geq
c^2 \sum_{k\geq 1} \diffSyncWalkLaw_0(\diffSyncWalk_k=(n-2x,0), \diffSyncWalk_j^\perp \geq 0\; \forall 1\leq j\leq k-1).
\end{align*}
Consider a trajectory \((\diffSyncWalk_j(\omega))_{j=0}^k\) be such that \(\diffSyncWalk_0(\omega)=0\) and \(\diffSyncWalk_k(\omega)=(n-2x,0)\). Denote by \(\check{X}_j(\omega)=\diffSyncWalk_{j+1}(\omega)-\diffSyncWalk_{j}(\omega)\) the corresponding increments.
Let \(j_0 = \min\setof{i\geq 0}{\diffSyncWalk_i^\perp(\omega)=\min_{0\leq j\leq k} \diffSyncWalk_k^\perp(\omega)}\). Define a new trajectory by setting \(\hatSyncWalk_0(\omega)=0\) and, for \(1\leq j\leq k\),
\[
\hatSyncWalk_j(\omega) = \sum_{i=0}^{j-1} \check{X}_{(j_0+i) \text{ mod } k}(\omega) .
\]
Observe that \(\hatSyncWalk_0=0\), \(\hatSyncWalk_k = (n,0)\) and \(\hatSyncWalk_i\geq 0\) for all \(1\leq i\leq k-1\), and that the transformation is measure-preserving. Therefore,
\[
\diffSyncWalkLaw_0(\diffSyncWalk_k=(n-2x,0), \diffSyncWalk_j^\perp \geq 0\; \forall 1\leq j\leq k-1) \geq k^{-1} \diffSyncWalkLaw_0(\diffSyncWalk_k=(n-2x,0))
\]
and, therefore, using~\eqref{eq:LBp},
\begin{equation}\label{eq:LBf2d}
f_n \geq c^2 n^{-1} u_{n-2x} \geq C_f^- n^{-3/2},
\end{equation}
for some \(C_f^->0\).

%
%

\bibliographystyle{plain}
\bibliography{OV17}

\end{document}